\title{Intersection Bodies of Polytopes: Translations and Convexity}
\author{Marie-Charlotte Brandenburg \and Chiara Meroni}
\date{}
\pgfplotsset{width=7cm, compat=1.10}
\definecolor{mygreen}{RGB}{37,170,37}
\definecolor{myteal}{RGB}{6,144,179}
\definecolor{myblue}{RGB}{59,100,189}
\definecolor{myorange}{RGB}{240,153,85}
\definecolor{myred}{RGB}{190,15,15}
\definecolor{myyellow}{RGB}{255,220,130}
\definecolor{mypurple}{RGB}{125,0,125}
\colorlet{figblue}{MidnightBlue!40}
\colorlet{figorange}{orange!50}
\colorlet{figgreen}{LimeGreen!60}
\newcommand{\R}{\mathbb{R}}
\newcommand{\conv}{\text{conv}}
\DeclareMathOperator{\vol}{vol}
\DeclareMathOperator{\sgn}{sgn}
\newtheorem{theorem}{Theorem}[section]
\newtheorem*{theorem*}{Results}
\newtheorem{lemma}[theorem]{Lemma}
\newtheorem{proposition}[theorem]{Proposition}
\theoremstyle{definition}
\newtheorem{example}[theorem]{Example}
\newtheorem{remark}[theorem]{Remark}
\newcommand{\linearr}[1]{\mathscr L(#1)}
\newcommand{\cocirc}[2]{s(#1)_{#2}}
\newcommand{\inner}[1]{ \langle #1 \rangle }
\DeclareMathOperator{\aff}{aff}
\newcommand{\ma}{\begin{pmatrix}}
\newcommand{\trix}{\end{pmatrix}}
\newcommand{\sma}{\left(\begin{smallmatrix}}
\newcommand{\strix}{\end{smallmatrix}\right)}
\newcommand{\origin}{\mathbf{0}}
\newcommand{\convof}[1]{\conv \left( #1 \right)}
\DeclareMathOperator{\cone}{cone}
\newcommand{\coneof}[1]{\cone \left( #1 \right)}
\DeclareMathOperator{\vertices}{vert}
\newcommand{\hyparr}{\mathcal H}
\newcommand{\hyparrofX}[1]{\hyparr(#1)}
\newcommand{\hyparrofP}{\hyparr(P)}
\newcommand{\midvert}{\ \middle\vert \ }
\newcommand{\set}[1]{\ensuremath{ \left\lbrace  #1 \right\rbrace }}
\newcommand{\edge}{e}
\colorlet{figbluedark}{black!10!MidnightBlue!80} %color1
\colorlet{figorangedark}{black!30!orange!70} %color5
\colorlet{figgreendark}{black!30!LimeGreen} %color6
\def\keywords{\xdef\@thefnmark{}\@footnotetext}
\def\mscclasses{\xdef\@thefnmark{}\@footnotetext}
\begin{document}

% \vspace*{-3cm}
\maketitle

\mscclasses{MSC classes:
52A30, %Variants of convex sets (star-shaped, (m, n)-convex, etc.)
52C35, %Arrangements of points, flats, hyperplanes (aspects of discrete geometry)$
52A38, %Length, area, volume and convex sets (aspects of convex geometry)
52B11, %n-dimensional polytopes
14P10 %Semialgberaic sets and related spaces
}
%\keywords{Keywords: intersection bodies, polytopes, convexity}

\begin{abstract}
    \noindent We continue the study of intersection bodies of polytopes, focusing on the behavior of $IP$ under translations of $P$.
    We introduce an affine hyperplane arrangement and show that the polynomials describing the boundary of $I(P+t)$ can be extended to polynomials in variables $t\in \mathbb{R}^d$ within each region of the arrangement. 
    In dimension $2$, we give a full characterization of those polygons such that their intersection body is convex. We give a partial characterization for general dimensions.
\end{abstract}

\vspace{0.2cm}
\section{Introduction}
In the field of convex geometry, intersection bodies have been widely studied from an analytical viewpoint, and mainly in the context of volume inequalities. Originally introduced by Lutwak \cite{lutwak88_intersectionbodiesdual}, they have played a significant role in solving the Busemann-Petty problem, which asks to compare the volume of two convex bodies based on the volumes of their linear sections \cite{Gardner19942,Gardner19941,Koldobsky1998,GKS1999,Zhang1999}.
Unlike its more famous counterparts, the projection body, the intersection body $IK$ of a star body $K$ is not invariant under affine translation. Furthermore, an intersection body can be both convex and non-convex.
Convexity is certified \emph{Busemann's theorem} \cite{busemann49_theoremconvexbodies}, 
which states that $IK$ is convex if $K$ is a convex body centered at the origin (i.e., $K$ is centrally symmetric, where the center of symmetry is the origin), and this statement has been generalized to $L_p$-intersection bodies \cite{berck09_convexityl_pintersection}. On the other hand, given a convex body $K\subseteq \R^d$, there always exists some $t\in \R^d$ such that $I(K+t)$ is not convex \cite[Thm. 8.1.8]{G2006}. 

The occurrence of non-convex intersection bodies has motivated considerations of various measures for capturing the magnitude of their non-convexity, leading to the study of \emph{$p$-convexity} of intersection bodies both over the complex numbers and over the reals \cite{kim11_geometrypconvex,huang12_busemanntheoremcomplex}.
Another direction of research concerns an adaptation of the construction of intersection bodies in order to get convexity, which resolves in \emph{convex intersection bodies} \cite{MeyRes:ConvexIntBody,Stephen:DetProblemConvexIB}. A different relative of intersection bodies is the cross-section body \cite{Martini:CrossSectionBodies,Horst1994}; however, this starshaped set turned out to be non-convex as well, in the general case \cite{Brehm:NonConvexCrossSectionBody}.
Summarizing, many of the positive results towards convexity in all these works concern intersection bodies of centrally symmetric star bodies. In contrast, we focus on affine translates, and consider objects which are not necessarily centrally symmetric.

The goal of this article is to investigate the behavior of intersection bodies of polytopes under translations, and to determine under which translations the intersection body is convex.
In our previous work \cite{BBMS:IntersectionBodiesPolytopes} we exhibit rich semialgebraic structures of intersection bodies of polytopes. However, in general, the intersection body $IP$ of a polytope $P$ is not a basic semialgebraic set, and there exists a central hyperplane arrangement which describes the regions in which the topological boundary of $IP$ is defined by a fixed polynomial.
Taking advantage of these combinatorial and semialgebraic structures opens up new possibilities to study the question of convexity in the present work. In particular, exploiting this semialgebraicity, we are able to characterize convexity by using elementary geometric arguments.

In this article we introduce an \emph{affine hyperplane arrangement} associated to a fixed polytope $P$. We prove that for translation vectors $t \in \R^d$ within a region of this arrangement the polynomials defining the boundary of $I(P+t)$ can be extended to polynomials in $t_1,\dots,t_d$ (\Cref{th:continuous}).
In dimension $2$, we give a full characterization of those polygons with a convex intersection body. We give a partial characterization for general dimension.

\begin{theorem*}
    Let $P$ be a full-dimensional polytope in $\R^d$. 
    \begin{enumerate}[\textup{(}i\textup{)}]
        \item If $d=2$ then $IP$ is convex if and only if $P=-P$.
        \item If $P\subset \R^d$ is a parallelepiped, then $IP$ is convex if and only if $P=-P$.
        \item If $IP$ is strictly convex then $I(P+t)$ is strictly convex, for small translation vectors $t$.
    \end{enumerate}
\end{theorem*}
A full classification of the $2$-dimensional case is given in \Cref{thm:IP_convex_dim2}, and the remaining statements can be found in \Cref{prop:cube} and \Cref{prop:open-ball}. An example of a strictly convex intersection body is given in \Cref{ex:strictly-convex}. %\\

\textbf{Overview.} 
The article is structured as follows. In \Cref{sec:preliminaries} we review the main concepts and notation from \cite{BBMS:IntersectionBodiesPolytopes}. In \Cref{sec:affine-arrangement} we introduce an affine hyperplane arrangement and describe how it governs the behavior of $IP$ under translation of $P$. 
We then turn to the characterization of convexity, where \Cref{sec:convexity} concerns the $2$-dimensional case, and \Cref{sec:higher_dim} the case of general dimensions. 

\textbf{Acknowledgements.} We are thankful to Christoph Hunkenschröder for posing a question during a seminar discussion which inspired this work. We thank Andreas Bernig and Jesús De Loera for inspiring conversations about intersection bodies and convexity. We thank Isabelle Shankar for helpful feedback that helped us improve our manuscript. We are thankful to the organizers of the conference ``Geometry meets Combinatorics in Bielefeld'', where most of our ideas fell into place. Marie-Charlotte Brandenburg was funded by the Deutsche Forschungsgemeinschaft (DFG, German Research Foundation) – SPP 2298.

\section{Preliminaries}
\label{sec:preliminaries}

We will rely on methods and results which were developed in \cite{BBMS:IntersectionBodiesPolytopes}. In this section we review the most important concepts and results we are going to make use of.

Let $P \subseteq \R^d$ be a convex polytope. The \emph{intersection body} $IP$ of $P$ is the starshaped set
\[
    IP = \set{x \in \R^d \midvert \rho_{IP}(x) \geq 1},
\]
where the \emph{radial function} $\rho_{IP}:\R^d\to \R$ of $IP$ is
\[
    \rho_{IP}(x) = \frac{1}{\|x \|} \vol_{d-1}(P \cap x^\perp).
\]
Here, $\vol_{d-1}$ denotes the $(d-1)$-dimensional Euclidean volume, and $x^\perp \subseteq \R^d$ denotes the linear hyperplane which is orthogonal to $x \in \R^d$, namely the set $x^\perp = \{y\in \R^d \,|\, \langle x, y \rangle = 0\}$.
To obtain meaningful results, we may thus assume that $P \subseteq \R^d$ is a $d$-dimensional polytope throughout this article.
The topological boundary of the intersection body $IP$ is defined by the equation $\partial IP = \{ x \in \R^d \mid \rho_{IP}(x) = 1 \}$. 
Since the radial function satisfies $\rho_{IP}(\lambda x) = \frac{1}{\lambda} \rho_{IP}(x)$ for every $\lambda >0$, it is completely determined by its restriction to the unit sphere.

The intersection body $IP$ of a polytope is governed by the central hyperplane arrangement 
\[
    \hyparrofP = \bigcup_{\substack{v\neq \origin \text{ is a } \\ \text{vertex of } P}} v^\perp.
\]
We denote the set of vertices of $P$ by $\vertices(P)$, and the origin is denoted by $\origin \in \R^d$.
An \emph{open chamber} $C$ of $\hyparrofP$ is a connected component of $\R^d \setminus \hyparrofP$. Given such a chamber $C$, all hyperplanes $x^\perp$ for $x \in C$ intersect $P$ in the interiors of a fixed set of edges.
The radial function restricted to such a chamber is a quotient of polynomials
\begin{equation}\label{eq:radial_chamber}
     \rho_{IP}|_C = \frac{p_C}{\| x \|^2 q_C},
\end{equation}
where $p_C$ is divisible by $\|x\|^2$. Therefore, the topological boundary $\partial IP \cap C$ is the zero-set of the (irreducible) polynomial $\frac{p_C}{\| x \|^2} - q_C$. 
We repeat a key argument in the proof of \eqref{eq:radial_chamber}. 
Let $x \in C$ and $Q = P \cap x^\perp$. The value $\rho_{IP}(x)$ is by definition the volume of $Q$. This computation is done by considering a triangulation $\mathcal T$ of the boundary of $Q$. We extend this to a covering of $\conv(Q,\origin)$ by considering the set $\conv(\Delta,\origin)$ for every simplex $\Delta \in \mathcal T$ such that $\origin \not \in \Delta$. Note that if $\origin \in P$, then this induces a central triangulation of $Q$.
Denoting $v_1, \dots, v_d$ the vertices of a simplex $\Delta\in\mathcal{T}$, the volume of $\conv(\Delta,\origin) = \conv(v_1, \dots, v_d, \origin)$ is, up to a constant scaling factor, given by the determinant of the matrix
\[
    M_{\Delta}(x) = \begin{bmatrix}
		\frac{\langle b_{i_1}, x \rangle a_{i_1} - \langle a_{i_1}, x \rangle b_{i_1}}{\langle b_{i_1}-a_{i_1}, x \rangle} \\
		\vdots \\
		\frac{\langle b_{i_{d-1}}, x \rangle a_{i_{d-1}} - \langle a_{i_{d-1}}, x \rangle b_{i_{d-1}}}{\langle b_{i_{d-1}}-a_{i_{d-1}}, x \rangle} \\
		x
	\end{bmatrix},
\]
where the vertices $v_i$ arise as intersection of $x^\perp$ with edges of $P$, i.e., $v_i = \conv(a_i,b_i) \cap x^\perp$ for $a_i, b_i \in \vertices(P)$. 
Assigning $\sgn(\Delta) \in \{-1,1\}$ to each simplex, this gives 
\[
    \rho_{IP}(x) = \vol_{d-1}(Q) = \frac{1}{\|x\|^2 (d-1)!} \sum_{\Delta \in \mathcal T} \sgn(\Delta) \det(M_\Delta(x)) .
\]

\section{Translations and Affine Hyperplane Arrangements}
\label{sec:affine-arrangement}
Let $P \subseteq \R^d$ be a polytope. In this section we consider how the intersection body of $P+t$ transforms under variation of $t \in \R^d$.
Recall from \Cref{sec:preliminaries} that the combinatorial structure of the boundary of $I(P+t)$ is described by the \emph{central} hyperplane arrangement $\hyparrofX{P+t}$. We thus begin by studying the behavior of this hyperplane arrangement under translation of $P$. For this, we introduce a new \emph{affine} hyperplane arrangement $\linearr{P}$, which captures the essence of $\hyparrofX{P+t}$ under variation of $t$. We show that within a region $R$ of $\linearr{P}$ the polynomials describing the boundary of $I(P+t)$, for $t \in R$, can be extended to polynomials in the variables $t_1,\dots,t_d$.

Let $P \subseteq \R^d$ be a polytope and let $\vertices(P)$ be the set of its vertices. Denote by $H_v = v^\perp \subseteq \R^d$ the hyperplane though the origin that is orthogonal to a vertex $v\in\vertices(P)$.
As described in the previous section, the collection of all such hyperplanes forms a central hyperplane arrangement $\hyparrofP$ in $\R^d$.
For each such hyperplane we define its \emph{positive} and \emph{negative side} as
\[
H_v^+ = \set{ x \in \R^d \mid \inner{x,v} > 0 } \text{ and }
H_v^- = \set{ x \in \R^d \mid \inner{x,v} < 0 }.
\]
We now choose a translation vector $t \in \R^d$ and consider the vertices $\set{v+t \midvert v \in \vertices(P)}$ of the translated polytope $P+t$. The hyperplane arrangement $\hyparrofX{P+t}$ is given by the hyperplanes $(v+t)^\perp$, where $v$ ranges over the vertices of $P$. 
The hyperplane $H_{v+t}$ can be obtained from $H_v$ by a rotation $r_{v,t}: \R^d \to \R^d$ such that $ r_{v,t}\left(\tfrac{v}{||v||}\right) = \tfrac{v+t}{||v+t||} $, and thus $r_{v,t}(H_v) = H_{v+t}$, $r_{v,t}(H^+_v) = H^+_{v+t}$ and $r_{v,t}(H^-_v) = H^-_{v+t}$. 

We label each maximal chamber $C$ of $\hyparrofX{P+t}$ with a sign vector $\cocirc{C}{} \in \{+,-\}^{\vertices(P+t)}$ indexed by the vertices $w=v+t$ of $P+t$, where 
\begin{align*}
	\cocirc{C}{w} &= + \quad\hbox{ if } C \subseteq H^+_w, \\
	\cocirc{C}{w} &= - \quad\hbox{ if } C \subseteq H^-_w.
\end{align*}
The set $\set{\cocirc{C}{} \mid C \hbox{ maximal chamber of } \hyparrofX{P+t} }$ describes the \emph{chirotope} or \emph{signed cocircuits of the underlying oriented matroid} of the hyperplane arrangement \cite[Chapter 6.2.3]{goodman18_handbookdiscretecomputational}.
 
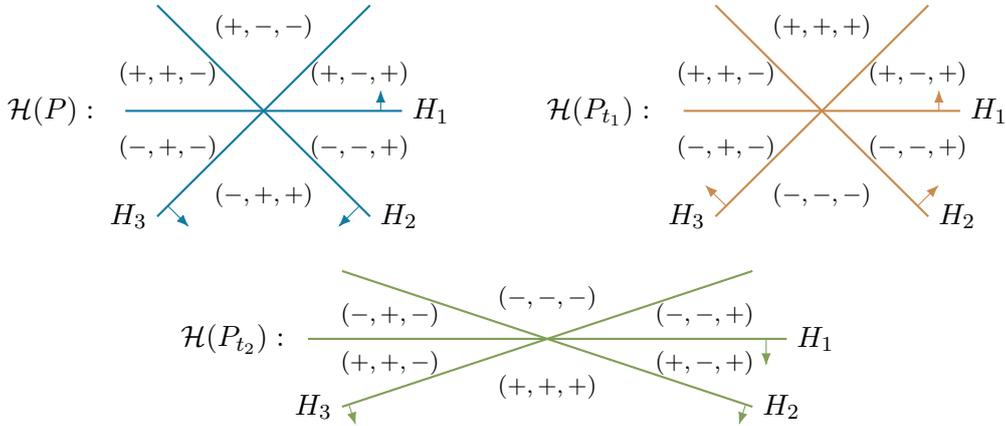
\begin{figure}[!h]
		\centering
		\begin{tikzpicture}[scale=1.4]
	\draw[thick, color=figbluedark] (-1.3,0) -- (1.3,0);
	\draw[thick, color=figbluedark] (-1,1) -- (1,-1);
	\draw[thick, color=figbluedark] (-1,-1) -- (1,1);
	\node[anchor = west] at (1.3,0) {$H_1$};
	\draw[>=Latex, ->, color=figbluedark] (1.1,0) -- (1.1, 0.2);
	\node[anchor = west] at (1,-1) {$H_2$};
	\draw[>=Latex, ->, color=figbluedark] (0.9,-0.9) -- (0.7, -1.1);
	\node[anchor = east] at (-1,-1) {$H_3$};
	\draw[>=Latex, ->, color=figbluedark] (-0.9,-0.9) -- (-0.7, -1.1);
	\node at (0.9,0.35) {\footnotesize{$(+,-,+)$}};
	\node at (0,0.8) {\footnotesize{$(+,-,-)$}};
	\node at (-0.9,0.35) {\footnotesize{$(+,+,-)$}};
	\node at (-0.9,-0.35) {\footnotesize{$(-,+,-)$}};
	\node at (0,-0.8) {\footnotesize{$(-,+,+)$}};
	\node at (0.9,-0.35) {\footnotesize{$(-,-,+)$}};
	\node at (-2,0) {$\hyparrofX{P}:$};
\end{tikzpicture}
\hspace*{2em}
\begin{tikzpicture}[scale=1.4]
	\draw[thick, color=figorangedark] (-1.3,0) -- (1.3,0);
	\draw[thick, color=figorangedark] (-1,1) -- (1,-1);
	\draw[thick, color=figorangedark] (-1,-1) -- (1,1);
	\node[anchor = west] at (1.3,0) {$H_1$};
	\draw[>=Latex, ->, color=figorangedark] (1.1,0) -- (1.1, 0.2);
	\node[anchor = west] at (1,-1) {$H_2$};
	\draw[>=Latex, ->, color=figorangedark] (0.9,-0.9) -- (1.1, -0.7);
	\node[anchor = east] at (-1,-1) {$H_3$};
	\draw[>=Latex, ->, color=figorangedark] (-0.9,-0.9) -- (-1.1, -0.7);
	\node at (0.9,0.35) {\footnotesize{$(+,-,+)$}};
	\node at (0,0.8) {\footnotesize{$(+,+,+)$}};
	\node at (-0.9,0.35) {\footnotesize{$(+,+,-)$}};
	\node at (-0.9,-0.35) {\footnotesize{$(-,+,-)$}};
	\node at (0,-0.8) {\footnotesize{$(-,-,-)$}};
	\node at (0.9,-0.35) {\footnotesize{$(-,-,+)$}};
	\node at (-2.1,0) {$\hyparrofX{P_{t_1}}:$};
\end{tikzpicture}
\vspace*{1em}\\
\begin{tikzpicture}[scale=0.9]
	\draw[thick, color=figgreendark] (-3.5,0) -- (3.5,0);
	\draw[thick, color=figgreendark] (-3,1) -- (3,-1);
	\draw[thick, color=figgreendark] (-3,-1) -- (3,1);
	\node[anchor = west] at (3.5,0) {$H_1$};
	\draw[>=Latex, ->, color=figgreendark] (3.2,0) -- (3.2, -0.4);
	\node[anchor = west] at (3,-1) {$H_2$};
	\draw[>=Latex, ->, color=figgreendark] (2.9,-0.97) -- (2.8, -1.27);
	\node[anchor = east] at (-3,-1) {$H_3$};
	\draw[>=Latex, ->, color=figgreendark] (-2.9,-0.97) -- (-2.8, -1.27);
	\node at (2.3,0.35) {\footnotesize{$(-,-,+)$}};
	\node at (0,0.6) {\footnotesize{$(-,-,-)$}};
	\node at (-2.3,0.35) {\footnotesize{$(-,+,-)$}};
	\node at (-2.3,-0.35) {\footnotesize{$(+,+,-)$}};
	\node at (0,-0.7) {\footnotesize{$(+,+,+)$}};
	\node at (2.3,-0.35) {\footnotesize{$(+,-,+)$}};
	\node at (-4.6,0) {$\hyparrofX{P_{t_2}}:$};
\end{tikzpicture}
		\caption[Hyperplane arrangements of translations of $P$]{The hyperplane arrangements of $P+t$ from \Cref{ex:ordered-types-triangle}.}
		\label{fig:ordered-types-triangle}
\end{figure}
\begin{example}
	\label{ex:ordered-types-triangle}
 	Let $P = \convof{v_1, v_2, v_3}$ be the triangle with vertices
	\[
	v_1 = \ma 0 \\ 1 \trix, \qquad v_2 = \ma -1 \\ -1 \trix, \qquad v_3 =  \ma 1 \\ -1 \trix.
	\]
	\Cref{fig:ordered-types-triangle} shows the hyperplane arrangements $\hyparrofX{P+t}$ for $t_0 = \origin$, $t_1 = (0,2)$, and $t_2 = (0,-2)$.
 	Note that the underlying oriented matroids of $\hyparrofX{P+t}$ for $t = t_1$ and $t = t_2$ are the same.
	We continue with this in \Cref{ex:affine-arrangement-triangle}.
\end{example}

We begin by showing that the signed cocircuit $s(C)$ of a chamber $C$ fully determines the set of edges of $P$ which are intersected by $x^\perp$ for any $x \in C$.

\begin{lemma}\label{lem:intersect-fixed-edges}
	Let $P \subseteq \R^d$ be a polytope and let $t \in \R^d$. Let $C$ be a maximal open chamber of $\hyparrofX{P}$, and $C_t$ be a maximal open chamber of $\hyparrofX{P+t}$ such that $\cocirc{C}{} = \cocirc{C_t}{}$, i.e., their signed cocircuits agree. Given $x \in C, \ x_t \in C_t$ consider 
	\begin{align*}
		\mathcal E &= \{ \edge \subseteq P \mid \edge \text{ is an edge of } P,\ x^\perp \cap \edge \neq \emptyset \}, \\ 
		\mathcal E_t &= \{ \edge_t \subseteq P+t \mid \edge_t \text{ is an edge of } P+t,\ x_t^\perp \cap \edge_t \neq \emptyset \}.
	\end{align*}
	Then $\mathcal E_t = \{e + t \mid e \in \mathcal E\}$.
\end{lemma}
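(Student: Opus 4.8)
The plan is to reduce the asserted equality of edge sets to a pointwise condition on the two endpoints of each edge. Since translation by $t$ is an affine isomorphism, it induces an isomorphism of face lattices $F \mapsto F+t$; in particular every edge of $P+t$ has the form $e+t$ for a unique edge $e$ of $P$. Thus the identity $\mathcal E_t = \set{e+t \mid e \in \mathcal E}$ will follow once I show, for each edge $e = \convof{a,b}$ of $P$ with $a,b \in \vertices(P)$, that $x^\perp \cap e \neq \emptyset$ if and only if $x_t^\perp \cap (e+t) \neq \emptyset$.

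The first step is to characterize when $x^\perp$ crosses an edge purely in terms of signs. Because $x$ lies in an \emph{open} chamber $C$ of $\hyparrofX{P}$, it lies on no hyperplane $H_v$, so $\inner{x,v} \neq 0$ for every vertex $v$ of $P$ and no vertex lies on $x^\perp$. Writing a point of the segment as $\lambda a + (1-\lambda)b$, one sees that $x^\perp$ meets $e = \convof{a,b}$ if and only if $a$ and $b$ lie on strictly opposite sides of $x^\perp$, that is $\sgn(\inner{x,a}) \neq \sgn(\inner{x,b})$; in that case the intersection point automatically lies in the relative interior of $e$. By the definition of the signed cocircuit, these signs are exactly $\cocirc{C}{a}$ and $\cocirc{C}{b}$, so $e \in \mathcal E$ if and only if $\cocirc{C}{a} \neq \cocirc{C}{b}$.

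Applying the identical argument to $P+t$, the point $x_t \in C_t$, and the edge $e+t = \convof{a+t,b+t}$ shows that $e+t \in \mathcal E_t$ if and only if $\cocirc{C_t}{a+t} \neq \cocirc{C_t}{b+t}$. The hypothesis $\cocirc{C}{} = \cocirc{C_t}{}$, read through the natural identification of the index sets $\vertices(P)$ and $\vertices(P+t)$ via $v \leftrightarrow v+t$, then gives $\cocirc{C}{a} = \cocirc{C_t}{a+t}$ and $\cocirc{C}{b} = \cocirc{C_t}{b+t}$. Hence the two sign-difference conditions coincide, so $e \in \mathcal E \iff e+t \in \mathcal E_t$, which completes the argument.

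I do not expect a genuine obstacle here; the lemma is essentially the observation that whether $x^\perp$ crosses an edge depends only on the signs of its two endpoints, which are precisely the cocircuit data. The only points requiring care—the closest thing to a difficulty—are the bookkeeping needed to identify the two index sets $\vertices(P)$ and $\vertices(P+t)$ correctly, and the observation, guaranteed precisely by $x$ lying in an open chamber, that $x^\perp$ crosses the relative interior of an edge rather than passing through one of its vertices.
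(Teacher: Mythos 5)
Your proof is correct and follows essentially the same route as the paper's: both characterize the crossing of an edge $\convof{a,b}$ by $x^\perp$ as the condition $\cocirc{C}{a} \neq \cocirc{C}{b}$ on the signed cocircuit, transfer this via the hypothesis $\cocirc{C}{} = \cocirc{C_t}{}$, and use that translation carries edges of $P$ bijectively to edges of $P+t$. Your version is slightly more explicit in stating the biconditional (which yields both inclusions at once) and in noting that the open-chamber assumption rules out $x^\perp$ passing through a vertex, but these are presentational refinements of the same argument.
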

\begin{proof}
	Let $\edge = \convof{v_1, v_2} \in \mathcal E$ be an edge of $P$. Since $x^\perp \cap \edge \neq \emptyset$, we have that $v_1,v_2$ lie on different sides of $x^\perp$. Equivalently, we have $\cocirc{C}{v_1} = -\cocirc{C}{v_2}$, and without loss of generality $\cocirc{C}{v_1} = +$. Thus, $x \in H^+_{v_1} \cap H^{-}_{v_2}$. Since $\hyparrofX{P+t}$ is obtained from $\hyparrofX{P}$ by rotating the hyperplanes individually, and $\cocirc{C}{} = \cocirc{C_t}{}$, it follows that $x_t \in H^+_{v_1+t} \cap H^{-}_{v_2+t}$. Since $\edge+t$ is an edge of $P+t$ if and only if $\edge$ is an edge of $P$, the claim follows.
\end{proof}

We consider the \emph{affine hyperplane arrangement}
\[
\linearr{P} = \{ \aff(-v_1,\dots,-v_d) \mid v_1,\dots,v_d \text{ are affinely independent vertices of } P \},
\]
where $\aff(-v_1,\dots,-v_d)$ denotes the unique affine hyperplane containing the points $-v_1,$ $\dots,$ $-v_d$.
An \emph{open region} $R$ of $\linearr{P}$ is a connected component of $\R^d \setminus \linearr{P}$.
We emphasize that there are two hyperplane arrangements in $\R^d$ which which we consider simultaneously. We have the \emph{central} hyperplane arrangement $\hyparrofX{P+t}$, which depends on the choice of $t$, and subdivides $\R^d$ into open $d$-dimensional cones, which we call \emph{chambers} of $\hyparrofX{P+t}$. On the other hand, we have the \emph{affine} hyperplane  arrangement $\linearr{P}$, which subdivides $\R^d$ into open $d$-dimensional components, which we call \emph{regions} of $\linearr{P}$. Note that $\linearr{P+t} = \linearr{P} - t$ by construction.

\begin{example}\label{ex:affine-arrangement-triangle}
	Let $P$ be the triangle from \Cref{ex:ordered-types-triangle}. The affine line arrangement $\linearr{P}$ is shown in \Cref{fig:affine-arrangement-triangle}. Note that the translation vectors $t = t_0,t_1,t_2$ all lie in different regions of the arrangement, despite the fact that the signed cocircuits of $\hyparrofX{P+t_1}$ and $\hyparrofX{P+t_2}$ agree, as displayed in \Cref{fig:ordered-types-triangle}.
	\begin{figure}[ht]
		\centering
		\begin{tikzpicture}[scale=0.6]
	\draw[thick] (-2, 3) -- (1,-3);  
	\draw[thick] (2, 3) -- (-1,-3);
	\draw[thick] (-3,1) -- (3,1);
	\filldraw[color=figbluedark] (0,0) circle (2pt);
	\node[anchor=south] at (0,0) {$t_0$};
	\filldraw[color=figorangedark] (0,2) circle (2pt);
	\node[anchor=south] at (0,2) {$t_1$};
	\filldraw[color=figgreendark] (0,-2) circle (2pt);
	\node[anchor=north] at (0,-2) {$t_2$};
	\node[anchor = south east] at (-1,1) {$-v_3$};
	\node[anchor = south west] at (1,1) {$-v_2$};
	\node[anchor = west] at (0,-1) {$-v_1$};
\end{tikzpicture}
		\caption{The arrangement $\linearr{P}$ of the triangle from \Cref{ex:ordered-types-triangle,ex:affine-arrangement-triangle}.}
		\label{fig:affine-arrangement-triangle}
	\end{figure}
\end{example}

In the following we show that $\linearr{P}$ captures the characteristics of $\hyparrofX{P+t}$ under variation of $t$. More precisely, we show that within a region $R$ of $\linearr{P}$ the polynomials describing the boundary of $I(P+t)$, for $t \in R$, can be extended to polynomials in $t_1,\dots,t_d$.

\begin{proposition}
    \label{prop:cocircuits-fixed}
    Let $P \subseteq \R^d$ be a polytope and $R$ be an open region of $\linearr{P}$. Then the set of signed cocircuits of $\hyparrofX{P+t}$ is fixed for all $t \in R$.
\end{proposition}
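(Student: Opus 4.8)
The plan is to reduce the statement to the invariance of a finite collection of sign conditions --- namely the chirotope of the vector configuration formed by the translated vertices --- and then to observe that each of these sign conditions flips exactly across a hyperplane of $\linearr{P}$, so that none of them can change inside a single region.

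First I would recall that the set of signed cocircuits of the central arrangement $\hyparrofX{P+t}$, i.e.\ the set of chamber sign vectors $(\sgn \inner{x,v+t})_{v \in \vertices(P)}$ obtained for generic $x$, is an invariant of the oriented matroid of the configuration $\{v+t \mid v \in \vertices(P)\} \subseteq \R^d$. By the standard theory of oriented matroids this oriented matroid --- and hence its set of signed cocircuits --- is completely determined by the chirotope, that is, by the signs $\sgn \det[\, v_{i_1}+t \mid \cdots \mid v_{i_d}+t \,]$ ranging over all $d$-subsets $\{v_{i_1},\dots,v_{i_d}\}$ of $\vertices(P)$, computed with respect to a fixed ordering of the vertices. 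It therefore suffices to prove that each of these signs is constant as $t$ ranges over a single open region $R$ of $\linearr{P}$.

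Second I would analyze the determinant $D_{i_1\dots i_d}(t) := \det[\, v_{i_1}+t \mid \cdots \mid v_{i_d}+t \,]$ as a function of $t$. Expanding by multilinearity in the columns and discarding every term in which the column $t$ appears more than once (such terms have a repeated column and vanish), one sees that $D_{i_1\dots i_d}$ is an affine-linear function of $t$. Moreover $D_{i_1\dots i_d}(t) = 0$ if and only if the vectors $v_{i_1}+t,\dots,v_{i_d}+t$ are linearly dependent. If the vertices $v_{i_1},\dots,v_{i_d}$ are affinely dependent, then so are $v_{i_1}+t,\dots,v_{i_d}+t$ for every $t$, hence these vectors are linearly dependent for all $t$ and $D_{i_1\dots i_d}\equiv 0$. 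If instead they are affinely independent, the vectors $v_{i_1}+t,\dots,v_{i_d}+t$ are linearly dependent precisely when $\origin \in \aff(v_{i_1}+t,\dots,v_{i_d}+t) = \aff(v_{i_1},\dots,v_{i_d})+t$, that is, precisely when $t \in \aff(-v_{i_1},\dots,-v_{i_d})$; thus the zero set of $D_{i_1\dots i_d}$ is exactly the hyperplane $\aff(-v_{i_1},\dots,-v_{i_d})$ belonging to $\linearr{P}$. Finally I would conclude by connectedness: an open region $R$ of $\linearr{P}$ is connected and disjoint from every hyperplane of $\linearr{P}$, so for affinely dependent $d$-subsets $\sgn D_{i_1\dots i_d}$ is identically $0$, and for affinely independent $d$-subsets the affine function $D_{i_1\dots i_d}$ is nonvanishing on the connected set $R$ and hence of constant sign there. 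Consequently the whole chirotope is constant on $R$, and the set of signed cocircuits of $\hyparrofX{P+t}$ is the same for all $t \in R$.

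I expect the main obstacle to be the first step rather than the computation: one must make precise that the set of chamber sign vectors of the central arrangement is governed solely by the chirotope --- and not by finer metric data such as the individual rotations $r_{v,t}$ used to pass from $H_v$ to $H_{v+t}$ --- and one must keep the vertex labeling fixed throughout so that the determinant signs are compared coherently across different values of $t$. The multilinearity computation and the connectedness argument are then routine, with the geometric identification of the zero locus as a hyperplane of $\linearr{P}$ being the natural bridge between the two arrangements.
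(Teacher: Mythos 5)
Your proposal is correct, and at the strategic level it follows the same route as the paper: both arguments reduce the statement to the behavior of $d$-element subsets of vertices, using precisely the correspondence between affinely independent subsets $\{v_{i_1},\dots,v_{i_d}\}$ and the hyperplanes $\aff(-v_{i_1},\dots,-v_{i_d})$ of $\linearr{P}$, together with the observation that the translated vertices $v_{i_1}+t,\dots,v_{i_d}+t$ are linearly independent exactly when $t$ avoids that hyperplane. The difference lies in execution. The paper argues geometrically: for each such subset the corresponding subarrangement of $d$ hyperplanes is simplicial, $\hyparrofX{P+t}$ is the common refinement of all these subarrangements, and the sign vectors of the full arrangement are claimed to be determined by the data of the subarrangements, which is claimed to be fixed on $R$. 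Your version makes rigorous the two steps that the paper leaves informal. First, the statement that the "signed cocircuits of the subarrangements are fixed" is, read literally as a statement about sets of sign vectors, vacuous --- a simplicial arrangement of $d$ independent hyperplanes realizes all $2^d$ sign vectors for every $t$; the invariant that actually carries information, and that actually flips across the hyperplane $\aff(-v_{i_1},\dots,-v_{i_d})$, is the orientation $\sgn\det[\,v_{i_1}+t \mid \cdots \mid v_{i_d}+t\,]$, which is exactly the chirotope entry you isolate, and whose constancy on $R$ you prove via affine-linearity in $t$ and connectedness. Second, your appeal to the standard fact that the chirotope determines the oriented matroid, and hence its set of covectors and topes, replaces the paper's unproved "common refinement" step with a citable theorem, provided (as you note) the vertex labeling is kept fixed so that sign vectors for different $t$ are compared coherently. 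So your proof is a tightened version of the paper's: same skeleton, but with the key invariant and the key determination step pinned down precisely, which is exactly where the paper's write-up is weakest.
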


\begin{proof}
    Let $v_1,\dots,v_d$ be affinely independent vertices of $P$. By construction of $\linearr{P}$, $R$ does not intersect $\mathcal{A} = \aff(-v_1,\dots,-v_d)$, i.e., $R$ is strictly contained in one side of this hyperplane. Without loss of generality, we assume $R \subseteq \mathcal A^+$. The points $w_k = v_k + t$, for $k=1,\ldots,d$, are linearly independent vertices of $P+t$ for all $t \in \R^d \setminus \mathcal A$. Hence, the subarrangement of $\hyparrofX{P+t}$ consisting of hyperplanes $w_1^\perp,\dots,w_d^\perp$ is a simplicial arrangement which dissects $\R^d$ into $2^d$ open chambers, where each chamber is the image of an orthant of $\R^d$ under the linear map $f$ defined by $e_i \mapsto w_i$ for all $i = 1,\ldots,d$. Note that the signed cocircuits are fixed for every $t \in \mathcal A^+$. 
    We now consider $\hyparrofX{P+t}$ as common refinement of all subarrangements formed by $d$ hyperplanes with linearly independent normals. The signed cocircuit of a chamber of $\hyparrofX{P+t}$ is uniquely determined by the signed cocircuits of all subarrangements, and the cocircuits of the subarrangements are fixed for all $t \in R$. Thus, the cocircuits of $\hyparrofX{P+t}$ are fixed for all $t \in R$.
\end{proof}

\begin{theorem}\label{th:continuous}
	Let $R$ be an open region of $\linearr{P}$, $t \in R$, 
    and let $C_t$ be an open chamber of $\hyparrofX{P+t}$.
 Then the radial function $\rho_{I(P+t)}|_{C_t}$ of $I(P+t)$ restricted to the chamber $C_t$ and for $t\in R$ is a polynomial in the variables $t_1,\ldots ,t_d$ of degree at most $d-1$.
\end{theorem}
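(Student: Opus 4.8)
The plan is to argue directly from the determinantal formula for the radial function recalled in \Cref{sec:preliminaries}, tracking how each ingredient varies with $t$. I would first fix the region $R$ together with a chamber $C_t$ of $\hyparrofX{P+t}$, say of signed cocircuit $s$. By \Cref{prop:cocircuits-fixed} this cocircuit occurs for \emph{every} $t \in R$, and by \Cref{lem:intersect-fixed-edges} the hyperplane $x^\perp$ (for $x \in C_t$) meets exactly the translated edges $\set{e + t \midvert e \in \mathcal E}$ for one fixed edge set $\mathcal E$ not depending on $t$. Hence the combinatorial type of the section $Q = (P+t)\cap x^\perp$ is constant across $R$, and a single triangulation $\mathcal T$ with one sign assignment $\sgn(\Delta)$ can be inserted into
\[
    \rho_{I(P+t)}(x) = \frac{1}{\|x\|^2 (d-1)!} \sum_{\Delta \in \mathcal T} \sgn(\Delta)\, \det\bigl(M_\Delta(x)\bigr)
\]
uniformly in $t \in R$, where now the rows of $M_\Delta$ are built from the translated vertices $a_i + t$ and $b_i + t$. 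This fixed-combinatorics expression is a rational function of $x$ that restricts to $\rho_{I(P+t)}$ on $C_t$ and furnishes the polynomial extension the statement refers to.

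The heart of the argument is the $t$-dependence of the rows. Each row is an intersection point $v_i(t) = \conv(a_i + t, b_i + t)\cap x^\perp$. Substituting $a_i \mapsto a_i + t$ and $b_i \mapsto b_i + t$ into the row formula, the denominator is unchanged because $\inner{(b_i+t)-(a_i+t),x} = \inner{b_i - a_i, x}$, and expanding the numerator I expect the quadratic cross term $\inner{t,x}\,t$ to cancel, leaving
\[
    v_i(t) = \frac{\inner{b_i,x} a_i - \inner{a_i,x} b_i}{\inner{b_i-a_i,x}} + t + \frac{\inner{t,x}}{\inner{b_i-a_i,x}}(a_i - b_i),
\]
which is affine-linear in $t$. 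Since the last row of $M_\Delta$ is $x$, independent of $t$, the matrix has exactly $d-1$ rows of degree at most $1$ in $t$ and one row of degree $0$.

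By multilinearity of the determinant in its rows, $\det(M_\Delta(x))$ is then a polynomial in $t_1,\dots,t_d$ of total degree at most $d-1$: expanding each affine row into its constant and linear parts produces summands whose degree equals the number of linear parts chosen, which is at most $d-1$. The scalar prefactor $\tfrac{1}{\|x\|^2(d-1)!}$ and the denominators $\inner{b_i-a_i,x}$ are independent of $t$ and therefore do not change the degree in $t$, and summing over $\Delta \in \mathcal T$ preserves the bound. This yields the desired polynomiality and degree estimate.

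The main obstacle I anticipate is bookkeeping rather than conceptual: I must verify that the cancellation in the numerator is exact, so that $v_i(t)$ is genuinely affine and no hidden $\inner{t,x}\,t$ term survives, and I must invoke \Cref{prop:cocircuits-fixed,lem:intersect-fixed-edges} carefully to guarantee that the edge set, the triangulation, and the signs may be frozen across the whole region $R$ — otherwise the formula above would not be valid uniformly in $t$, and the notion of a single polynomial in $t$ would be ill-posed.
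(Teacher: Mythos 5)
Your proposal is correct and follows essentially the same route as the paper's own proof: it invokes \Cref{prop:cocircuits-fixed} and \Cref{lem:intersect-fixed-edges} to freeze the intersected edge set (hence the triangulation and signs) across $R$, and then observes that the translated rows of $M_\Delta$ decompose as $M_\Delta(x,\origin)$ plus the affine correction $\frac{\inner{t,x}(a_i-b_i)}{\inner{b_i-a_i,x}} + t$, so that multilinearity of the determinant over the $d-1$ non-constant rows gives degree at most $d-1$ in $t$. The cancellation of the $\inner{t,x}\,t$ term that you flag as needing verification does hold exactly, and is precisely the computation displayed in the paper's proof.
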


\begin{proof}
    By \Cref{prop:cocircuits-fixed}, for a fixed region $R$ the set of signed cocircuits of $\hyparrofX{P+t}$ is fixed.
    \Cref{lem:intersect-fixed-edges} then implies that given a region $R$, $t \in R$, and a chamber $C_t$ of $\hyparrofX{P+t}$, for any vector $x \in C_t$ the set of edges of $P+t$ which intersect $x^\perp$ is fixed. Let $Q = P \cap x^\perp$, for a certain $x \in C_t$, and let $\mathcal T$ be a triangulation of $\partial Q$, as explained in \Cref{sec:preliminaries}. Let $\Delta \in \mathcal T$ be a maximal simplex with vertices $v_1, \dots , v_{d-1}$ such that $\origin \not \in \Delta$ and, for each $i = 1,\ldots,d-1$, let $a_i,b_i \in \vertices(P)$ such that $v_i = \conv(a_i +t, b_i +t) \cap x^\perp$.
    The volume of the $d$-dimensional simplex $\conv(\Delta,\origin)$ is, up to a multiplicative factor of $\frac{\pm 1}{\|x\| (d-1)!}$, the determinant of the matrix
    \begin{align*}
    M_{\Delta}(x,t) = \!\begin{bmatrix}
		\frac{\langle b_{1}+t, x \rangle (a_{1}+t) - \langle a_{1}+t, x \rangle (b_{1}+t)}{\langle b_{1}-a_{1}, x \rangle} \\
		\vdots \\
		\frac{\langle b_{d-1}+t, x \rangle (a_{d-1}+t) - \langle a_{d-1}+t, x \rangle (b_{d-1}+t)}{\langle b_{d-1}-a_{d-1}, x \rangle} \\
		x
	\end{bmatrix} 
     = M_{\Delta}(x,\origin) +\! \begin{bmatrix}
        \frac{\langle t, x \rangle (a_{1}-b_1)}{\langle b_{1}-a_{1}, x \rangle} + t \\
		\vdots \\
		\frac{\langle t, x \rangle (a_{d-1}-b_{d-1}) }{\langle b_{d-1}-a_{d-1}, x \rangle} + t \\
		\origin
    \end{bmatrix}.
\end{align*}
The determinant of this matrix is a polynomial in the variables $t_1,\dots,t_d$ of degree at most $d-1$. Since the volume of $Q$ can be computed as
\[
    \vol(Q) = \frac{1}{\|x\| (d-1)!} \sum_{\Delta \in \mathcal T} \sgn(\Delta) \det(M_\Delta(x,t)),
\]
the claim follows.
\end{proof}

\begin{example}\label{ex:affine-lines-square}
	\Cref{fig:affine-lines-square} shows the continuous deformation of the intersection body $I(P+t)$ of the unit square $P = [-1,1]^2$ under translation by $t\in \R^2$ within each bounded region of the affine line arrangement $\linearr{P}$.
\end{example}
\begin{figure}[ht]
	\centering
	\includegraphics[height=3.2in]{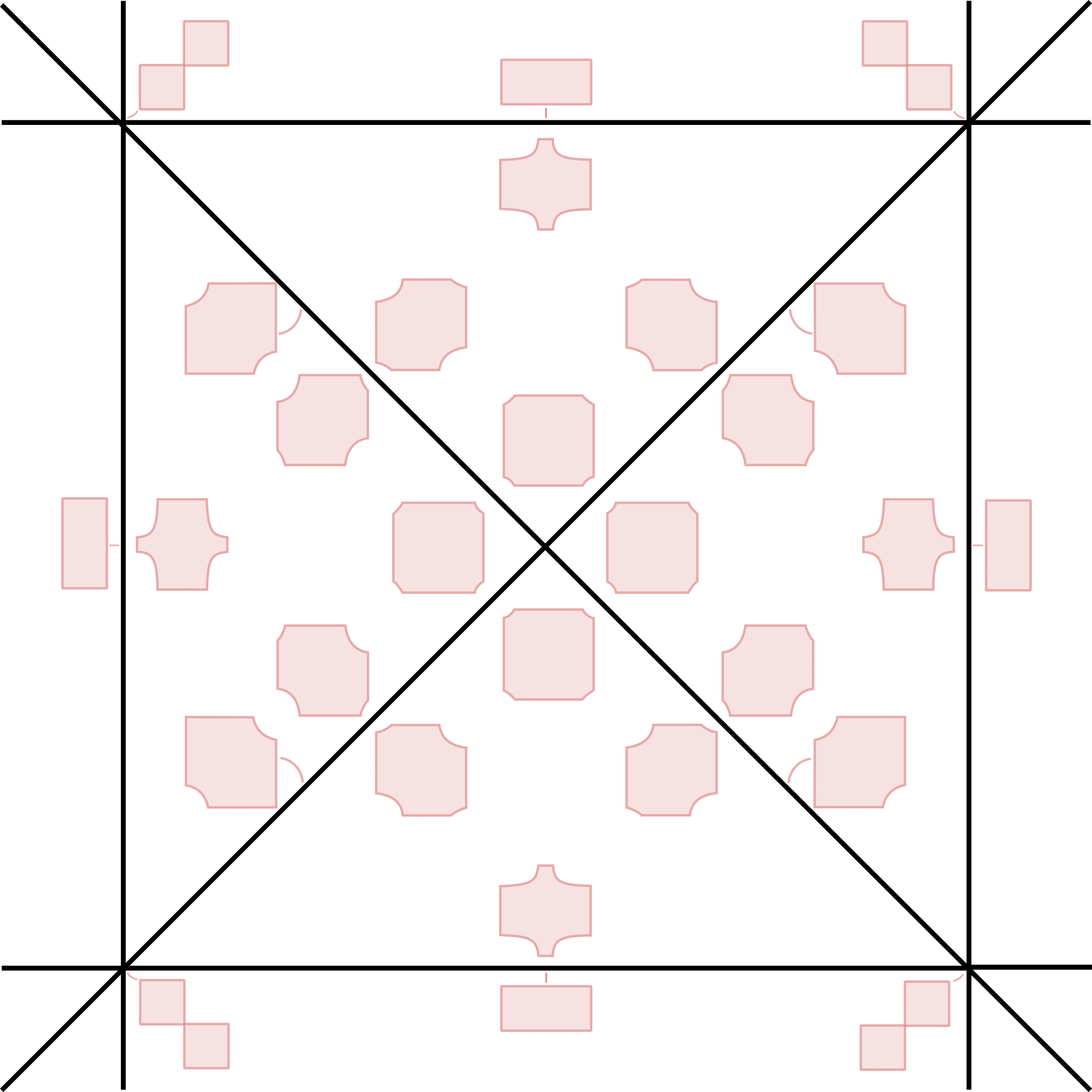}
	\caption{The arrangement $\linearr{P}$ of affine lines for $P = [-1,1]^2$, in black, together with $I(P+t)$ for different choices of $t$, in red, as in \Cref{ex:affine-lines-square}.}
	\label{fig:affine-lines-square}
\end{figure}

\section{Convexity in Dimension 2}\label{sec:convexity}

For each fixed region $R$ of the affine line arrangement $\linearr{P}$, \Cref{th:continuous} implies that, as we move $t \in R$ continuously, the intersection body $I(P+t)$ deforms continuously as well. We now characterize under which circumstances the intersection body of a \emph{polygon} is convex. Note that $IP$ cannot be convex if the origin lies outside of $P$ or is a vertex of $P$ (the argument for general dimensions will be given in \Cref{rmk:lower-dim-faces}). 
We thus consider the distinct cases of when the origin lies in the interior of $P$, and when the origin lies in the interior of an edge.
\Cref{fig:affine-lines-square} indicates that in the case of the square, the intersection body of $P+t$ is convex for precisely $5$ translation vectors: the center of symmetry, as well as the midpoints of the four edges. In \Cref{thm:IP_convex_dim2} we show that the number of such translation vectors is always finite, and that parallelograms maximize this number.

The goal of this section is to give a characterization of polygons whose intersection bodies are convex. In the following \Cref{prop:parallel_segments,prop:non_parallel_non_convex} we consider polygons with the origin in the interior, and characterize the geometry of the boundary of $IP$. More precisely, we will see that the chambers in which $IP$ is convex correspond to pairs of parallel edges of $P$, and that the polynomials defining the boundary of $IP$ are linear in this case.

\begin{proposition}\label{prop:parallel_segments}
	Let $P \subseteq \R^2$ be a polygon.
	Let $C$ be a chamber of $\hyparrofX{P}$, and consider $x\in C$. We denote by $v_1(x), v_2(x)$ the points of intersection $x^\perp \cap \partial P = \set{v_1(x), v_2(x)}$.
 Let $\convof{a_1,b_1}, \convof{a_2,b_2}$ be edges of $P$ such that $v_1(x)\in \convof{a_1,b_1}$ and $v_2(x)\in \convof{a_2,b_2}$. Then the polynomial defining $\partial IP$ in the chamber $C$ is linear if and only if the segments $\convof{a_1,b_1}$ and $\convof{a_2,b_2}$ are parallel. 
\end{proposition}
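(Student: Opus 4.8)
The plan is to specialise the general volume formula from \Cref{sec:preliminaries} to the planar case and to read off the defining polynomial of $\partial IP$ in $C$ explicitly. Since $\origin \in \interior(P)$, the set $Q = P \cap x^\perp$ is a segment containing $\origin$ in its relative interior, whose endpoints are $v_1(x) \in \convof{a_1,b_1}$ and $v_2(x) \in \convof{a_2,b_2}$. A triangulation of $\partial Q$ therefore consists of these two points, so
\[
    \rho_{IP}(x) = \frac{1}{\|x\|^2}\sum_{\Delta \in \mathcal T}\sgn(\Delta)\det(M_\Delta(x))
\]
has exactly two summands $\Delta_1,\Delta_2$, one per edge. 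For each, $M_{\Delta_i}(x) = \begin{bmatrix} v_i(x) \\ x \end{bmatrix}$, and expanding the determinant by linearity in the first row gives $\det(M_{\Delta_i}(x)) = \frac{\inner{b_i,x}[a_i,x] - \inner{a_i,x}[b_i,x]}{\inner{b_i - a_i, x}}$, where I write $[u,w] = u_1 w_2 - u_2 w_1$.

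First I would carry out the key algebraic simplification: a direct expansion shows that the numerator equals $[a_i,b_i]\,\|x\|^2$, since the mixed $x_1 x_2$ terms cancel while both square terms contribute $[a_i,b_i] = \det\begin{bmatrix} a_i \\ b_i\end{bmatrix}$. The factor $\|x\|^2$ then cancels against the prefactor, leaving
\[
    \rho_{IP}|_C(x) = \frac{\alpha_1}{\ell_1(x)} + \frac{\alpha_2}{\ell_2(x)}, \qquad \alpha_i = \sgn(\Delta_i)\,[a_i,b_i], \quad \ell_i(x) = \inner{b_i - a_i, x}.
\]
Here each $\alpha_i$ is a \emph{nonzero} constant, because $[a_i,b_i] = 0$ would force the edge $\convof{a_i,b_i}$ to lie on a line through $\origin$, impossible when $\origin \in \interior(P)$. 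Clearing denominators, $\partial IP \cap C$ is cut out by $g = \alpha_1 \ell_2 + \alpha_2 \ell_1 - \ell_1 \ell_2$, a polynomial of degree at most $2$, which I would rewrite as $g = -(\ell_1 - \alpha_1)(\ell_2 - \alpha_2) + \alpha_1\alpha_2$.

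The proof then splits according to whether the edges are parallel. If $\convof{a_1,b_1} \parallel \convof{a_2,b_2}$, then $b_2 - a_2 = \mu(b_1 - a_1)$ for some $\mu \neq 0$, hence $\ell_2 = \mu\,\ell_1$ and $g = \ell_1\bigl(\alpha_1\mu + \alpha_2 - \mu\,\ell_1\bigr)$ factors. The crucial observation is that $\ell_1 = \inner{b_1 - a_1, x}$ cannot vanish on the \emph{open} chamber $C$: for $x \in C$ the line $x^\perp$ meets the edge $\convof{a_1,b_1}$ in a single relative-interior point, so $b_1 - a_1$ is not orthogonal to $x$. Thus on $C$ the boundary is the zero set of the remaining factor $\alpha_1\mu + \alpha_2 - \mu\,\ell_1$, which is linear. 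Conversely, if the edges are not parallel, then $\ell_1,\ell_2$ are linearly independent linear forms, and after the affine change of coordinates $u = \ell_1 - \alpha_1$, $w = \ell_2 - \alpha_2$ one has $g = -uw + \alpha_1\alpha_2$, a smooth (hence irreducible) conic since $\alpha_1\alpha_2 \neq 0$; so $g$ has genuine degree $2$ and the defining polynomial is not linear.

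I expect the main obstacle to be the bookkeeping around which factor actually defines the boundary inside $C$: the determinantal formula naively produces a degree-$2$ polynomial even in the parallel case, and the drop to degree $1$ occurs only because one linear factor is identically nonzero on the open chamber. Making precise that $\ell_i \neq 0$ on $C$ (equivalently, that the denominators of $\rho_{IP}|_C$ never vanish on $C$) is the delicate point, together with the nonvanishing $\alpha_i \neq 0$ that secures irreducibility in the non-parallel case.
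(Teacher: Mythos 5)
Your proof is correct and takes essentially the same route as the paper: both reduce the radial function on $C$ to $\frac{\alpha_1}{\ell_1}+\frac{\alpha_2}{\ell_2}$ (the paper via the single determinant $\det\begin{bmatrix} v_1(x)-v_2(x)\\ x\end{bmatrix}$, citing \cite[Prop.~5.5]{BBMS:IntersectionBodiesPolytopes} for the $\|x\|^2$-cancellation that you instead verify by direct expansion), then clear denominators and decide linearity by factoring the resulting degree-two polynomial, using that $\ell_1$ does not vanish on the open chamber. The only substantive difference is in the non-parallel case: the paper merely asserts that the curve is a conic through the origin, whereas your rewriting $g=-(\ell_1-\alpha_1)(\ell_2-\alpha_2)+\alpha_1\alpha_2$ with $\alpha_1\alpha_2\neq 0$ explicitly certifies the conic is smooth and hence irreducible, ruling out the degenerate possibility that a linear component of it cuts out $\partial IP\cap C$ --- a point the paper leaves implicit.
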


\begin{proof}
	We want to prove that $\{x\in C \mid \  \rho_{IP}|_C(x)=1\}$ is a line segment if and only if the two edges $\convof{a_i,b_i}$ are parallel. Assume that $v_1(x) = \lambda a_1 + (1-\lambda) b_1$ and $v_2(x) = \mu a_2 + (1-\mu) b_2$ for some $\lambda,\mu \in (0,1)$. Since $v_1(x), v_2(x) \in x^\perp$, we have 
	\begin{equation*}
		\lambda = \frac{\inner{b_1, x}}{\inner{b_1-a_1, x}}, \qquad \mu = \frac{\inner{b_2, x}}{\inner{b_2-a_2, x}}.
	\end{equation*}

	We compute the length of $\conv(v_1(x),v_2(x))$, or equivalently of $\conv(\origin, v_1(x) - v_2(x))$. We do this via the area of the triangle with vertices $\origin, v_1(x) - v_2(x)$ and $\tfrac{x}{\| x \|^2}$.
	Hence, the radial function can be computed by
	the determinantal expression
	\[
	\rho_{IP}|_C(x) = \frac{1}{\| x\|^2} \left\vert \det \begin{bmatrix} v_1(x)-v_2(x) \\ x \end{bmatrix} \right\vert.
	\]
	We compute the radial function explicitly. First,
	\begin{equation*}
		v_1(x)-v_2(x) = \frac{ \left( \inner{b_2-a_2, x} \left(\inner{b_1,x} a_1
			- \inner{a_1,x} b_1\right) - \inner{b_1-a_1, x} \left(\inner{b_2,x} a_2
			- \inner{a_2,x} b_2 \right) \right)}{\inner{b_1-a_1, x} \inner{b_2-a_2, x}}.
	\end{equation*}
	The boundary $\partial P \cap C$ is given by the set of points $x \in C$ such that $\rho_{IP}|_C(x)=1$, i.e., the points which satisfy
	{\small{
	\begin{equation}\label{eq:ab,cd}
		\begin{multlined}
			\frac{1}{\| x\|^2}  \det \begin{bmatrix}
			 \inner{b_2-a_2, x} \left(\inner{b_1,x} a_1 - \inner{a_1,x} b_1\right) - \inner{b_1-a_1, x} \left(\inner{b_2,x} a_2 - \inner{a_2,x} b_2 \right)  \\ 
			x 
			\end{bmatrix} \\
			\qquad = \; \inner{b_1-a_1, x} \inner{b_2-a_2, x},
		\end{multlined}
	\end{equation}
}}

	assuming that the determinant in the left hand side is positive in $C$ (otherwise it gets multiplied by $-1$). This determinant is a cubic polynomial in $x$, which by \cite[Prop. 5.5]{BBMS:IntersectionBodiesPolytopes} is divisible by $\|x\|^2$. Hence, the left hand side of \eqref{eq:ab,cd} is a homogeneous linear polynomial in $x$. 
	It divides the right hand side if and only if $(b_2-a_2) = \kappa (b_1-a_1)$ for some $\kappa \in \R$, i.e., if the the two edges $\convof{a_i,b_i}$ are parallel. In this case \eqref{eq:ab,cd} is a linear equation, and hence the curve defined by \eqref{eq:ab,cd} is a line; otherwise it is a conic, passing through the origin.
\end{proof}

\begin{proposition}\label{prop:non_parallel_non_convex}
	Let $P\subseteq \R^2$ be polygon with the origin in its interior. If there exists a line through the origin which intersects $\partial P$ in two non-parallel edges, then $IP$ is not convex.
\end{proposition}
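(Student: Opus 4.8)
The plan is to show that within the chamber $C$ of $\hyparrofP$ containing the normal direction of the prescribed line, the boundary $\partial IP$ is a \emph{strictly concave} arc, which immediately rules out convexity of $IP$. By \Cref{prop:parallel_segments} the boundary in $C$ is an arc of a conic through the origin, but rather than analyzing the conic abstractly I would parametrize $\partial IP \cap C$ radially and compute its signed curvature directly. Write $x(\phi) = (\cos\phi, \sin\phi)$ for the unit vector in the boundary direction, let $d(\phi) = (-\sin\phi, \cos\phi)$ span the chord line $x(\phi)^\perp$, and set $B(\phi) = \rho_{IP}(x(\phi))\, x(\phi)$ for the corresponding boundary point. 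Since the origin lies in the interior of $P$, the two edges $\convof{a_1,b_1}, \convof{a_2,b_2}$ hit by $x(\phi)^\perp$ lie on opposite sides of $\origin$, so with outer unit normals $n_1,n_2$ and support values $c_i = \inner{n_i, a_i} > 0$ the chord meets the two edge lines at $s_1 d$ and $-s_2 d$ with $s_1 = c_1/f_1 > 0$ and $s_2 = -c_2/f_2 > 0$, where $f_i(\phi) = \inner{n_i, d(\phi)}$. Crucially $f_1 > 0 > f_2$ throughout $C$; this is exactly where interiority of the origin enters.

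Then $B = \tfrac{c_1}{f_1}x - \tfrac{c_2}{f_2}x$ splits as $B = B_1 + B_2$, and for a planar radial curve convexity is governed by $\det(B', B'')$, which must be nonnegative for the counterclockwise boundary of a convex body. I would compute this determinant using two elementary identities. First, differentiating $B_i = \pm\tfrac{c_i}{f_i}x$ and decomposing $n_i$ in the orthonormal frame $\{x,d\}$ gives the clean expressions $B_1' = \tfrac{c_1}{f_1^2} n_1$ and $B_2' = -\tfrac{c_2}{f_2^2} n_2$, so each summand moves along a fixed normal direction (reflecting that each $B_i$ traces a line, as in \Cref{prop:parallel_segments}). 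Second, the same frame decomposition yields $\inner{n_1,x}\inner{n_2,d} - \inner{n_2,x}\inner{n_1,d} = \det(n_1,n_2)$. Feeding $B'$ and $B''$ into the determinant, the diagonal terms $n_i \wedge n_i$ vanish and the cross terms collapse to
\begin{equation*}
    \det(B', B'') = \frac{2\, c_1 c_2\, [\det(n_1,n_2)]^2}{(f_1 f_2)^3}.
\end{equation*}

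The sign analysis then finishes the argument: $c_1, c_2 > 0$, and $[\det(n_1,n_2)]^2 > 0$ precisely because the two edges are \emph{non-parallel}, while $f_1 f_2 < 0$ by the interiority observation above, so $(f_1 f_2)^3 < 0$ and hence $\det(B', B'') < 0$ on all of $C$. Thus the signed curvature of $\partial IP$ is strictly negative along this arc, and $IP$ is not convex: concretely, any two nearby boundary points $B(\phi_1), B(\phi_2) \in C$ have their midpoint lying strictly outside $IP$. I expect the main obstacle to be purely in the bookkeeping of signs — fixing the orientation of the outer normals and verifying that $f_1 f_2 < 0$ holds on the \emph{entire} chamber (not just at a single $\phi$), which is the precise point at which the hypothesis ``origin in the interior'' is used — together with the standard but easily misstated fact that $\det(B',B'') \ge 0$ is the correct convexity criterion for the counterclockwise radial parametrization. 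A minor preliminary point is to choose the normal direction realizing the hypothesis inside an \emph{open} chamber, so that the two intersection points lie in the relative interiors of the non-parallel edges.
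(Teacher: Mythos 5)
Your proposal is correct, but it takes a genuinely different route from the paper's. The paper argues synthetically, with no calculus: it fixes two directions $u_a,u_b$ in the chamber, writes the four intersection points as $a,\,b,\,-\alpha a,\,-\beta b$ (non-parallelism becoming $\alpha\neq\beta$), and shows that the segment $\convof{p_a,p_b}$ between the two corresponding boundary points meets the ray spanned by $u_{a+b}$ at a point $q$ with $\|q\|-\|p_{a+b}\| = \tfrac{(\alpha-\beta)^2}{2(2+\alpha+\beta)(\alpha+\beta)}\|a+b\|>0$, thus exhibiting an explicit chord that leaves $IP$. You instead compute the signed curvature of the radial parametrization $B(\phi)=\rho_{IP}(x(\phi))\,x(\phi)$ and show it is strictly negative on the whole chamber. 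Your identities do check out: from $n_i=\inner{n_i,x}x+f_i d$ and $f_i'=-\inner{n_i,x}$ one gets $B_1'=\tfrac{c_1}{f_1^2}n_1$ and $B_2'=-\tfrac{c_2}{f_2^2}n_2$, and the cross terms collapse via $\inner{n_1,x}f_2-\inner{n_2,x}f_1=\det(n_1,n_2)$ to $\det(B',B'')=2c_1c_2[\det(n_1,n_2)]^2/(f_1f_2)^3<0$; the sign analysis ($c_i>0$ and $f_1f_2<0$ from interiority of the origin, $\det(n_1,n_2)\neq 0$ from non-parallelism) enters exactly where $\alpha\neq\beta$ and $\alpha,\beta>0$ enter in the paper, and your defect $[\det(n_1,n_2)]^2$ plays the role of the paper's $(\alpha-\beta)^2$. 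What your approach buys is a stronger, pointwise and quantitative statement: the boundary arc is strictly concave at every point of every such chamber, with an explicit curvature formula. What it costs is reliance on two facts you should state and justify: that $n_i,c_i$ are constant on the chamber so the arc is $C^2$ (this follows from the chamber structure recalled in \Cref{sec:preliminaries}), and that the counterclockwise boundary of a convex planar body has nonnegative signed curvature wherever it is twice differentiable; moreover your closing claim that the midpoint of two nearby boundary points lies strictly outside $IP$, while true, is not immediate from negative curvature alone and needs a small comparison argument, e.g.\ in polar coordinates $u=1/\rho_{IP}$ satisfies $u''+u=\det(B',B'')/\rho_{IP}^3<0$ whereas lines satisfy $u''+u=0$, so the chord lies radially outside the arc. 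The paper's three-point certificate avoids this machinery entirely, which is why it is the more elementary (if computationally less illuminating) argument.
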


\begin{proof}
	Let $C$ be a chamber of of $\hyparrofX{P}$ such that $x^\perp$ intersects two non-parallel edges $\ell_1, \ell_2$ of $P$. 
	Consider $u_a,u_b\in C\cap S^1$. As shown in \Cref{fig:non_parallel_non_convex}, we denote 
	\begin{align*}
		& \qquad u_a^\perp \cap \ell_1 = a = \sma a_1 \\ a_2 \strix, && u_b^\perp \cap \ell_1 = b = \sma b_1 \\ b_2\strix, \qquad  \\
		& \qquad u_a^\perp \cap \ell_2 = -\alpha a, && u_b^\perp \cap \ell_2 = -\beta b, \qquad 
	\end{align*}
	for some positive real numbers $\alpha, \beta > 0$. Since $\ell_1$ and $\ell_2$ are not parallel, we have $\alpha \neq \beta$. 
	We can choose $a,b$ such that $u_a = \frac{1}{\|a\|} \sma a_2 \\ -a_1 \strix$ and $u_b = \frac{1}{\|b\|} \sma b_2 \\ -b_1 \strix$. The lengths of the line segments $u_a^\perp \cap P = \convof{a , -\alpha a}$ and $u_b^\perp \cap P = \convof{b, -\beta b}$ are 
	\begin{align*}
		\| u_a^\perp \cap P \|= \| a - ( - \alpha a) \| = (1+\alpha) \|a \|, \\
		\| u_b^\perp \cap P \|= \| b - ( - \beta b) \| = (1+\beta) \|b \|.
	\end{align*}
	Thus, the boundary points of $IP$ in directions $u_a,u_b$ are
	\begin{align*}
	p_a &:= \rho_{IP} (u_a) \: u_a = (1 + \alpha ) \|a\| \: u_a = (1 + \alpha ) \: \ma a_2 \\ -a_1 \trix, \\
	p_b &:= \rho_{IP} (u_b) \: u_b = (1 + \beta ) \|b\| \: u_b = (1 + \beta ) \:  \ma b_2 \\ -b_1\trix 
\end{align*}
	respectively. Consider the midpoint $\frac{a+b}{2}\in \ell_1$ and let $u_{a+b}$ be the unit vector in $C$ orthogonal to $a+b$ (and thus also to $\frac{a+b}{2}$). 
	Then $u_{a+b} = \tfrac{1}{\|a+b\|} \sma a_2 + b_2 \\ -a_1 - b_1\strix$,
	$u_{a+b}^\perp\cap \ell_2 = -\frac{\alpha\beta}{\alpha + \beta} (a+b)$ and the boundary point of $IP$ in direction $u_{a+b}$ is
	\begin{equation*}
		p_{a+b} =  \rho_{IP} (u_{a+b}) \: u_{a+b} = \left(\frac{1}{2} + \frac{\alpha\beta}{\alpha + \beta} \right) \|a+b\| \: u_{a+b} = \left(\frac{1}{2} + \frac{\alpha\beta}{\alpha + \beta} \right) \: \ma a_2 + b_2 \\ -a_1 - b_1 \trix.
	\end{equation*}

	Let $q = \convof{p_a,p_b} \cap \coneof{u_{a+b}}$, as in \Cref{fig:non_parallel_non_convex}.
	We want to prove that $IP\cap C$ is not convex, by showing that $\|q\| > \| p_{a+b}\|$. 
	Indeed, we can compute that 
	\[
	q = \frac{(1+\alpha)(1+\beta)}{2+\alpha+\beta} (a_2 + b_2,-a_1 - b_1),
	\]
	and therefore
	\begin{align*}
		\|q\| - \| p_{a+b}\| &= \frac{(1+\alpha)(1+\beta)}{2+\alpha+\beta} \|a+b\| - \left( \frac{1}{2} + \frac{\alpha\beta}{\alpha + \beta} \right) \|a+b\| \\
		&= \frac{(\alpha-\beta)^2}{2(2+\alpha+\beta)(\alpha+\beta)} \|a+b\|.
	\end{align*}
	Since $\alpha\neq\beta$, this expression is strictly positive, and so $q\not\in IP$. This proves that $p_a,p_b \in IP$, but the segment $\convof{p_a,p_b}$ is not contained in $IP$. Hence, $IP$ is not convex.
 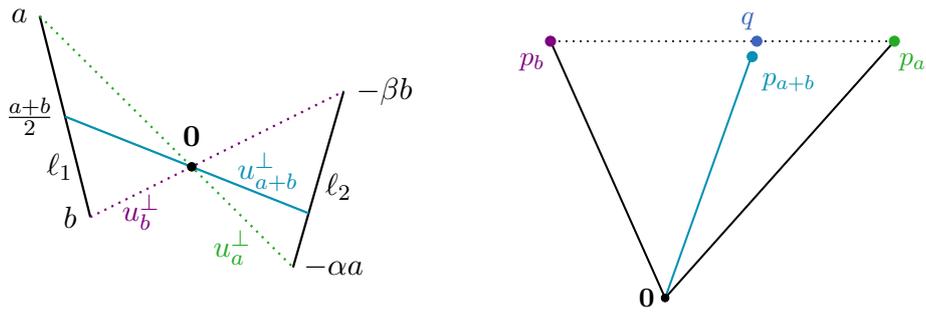
\begin{figure}[!h]
	\centering
	\begin{tikzpicture}[scale=1.05]
	\begin{axis}[
		width=2.2in,
		height=1.9in,
		scale only axis,
		xmin=-2.2,
		xmax=2.2,
		ymin=-1.6,
		ymax=2.2,
		ticks = none, 
		ticks = none,
		axis background/.style={fill=white},
		axis line style={draw=none} 
		]

		\addplot [color=black,thick,solid,forget plot]
		table[row sep=crcr]{%
			-1   -0.5\\
			-1.5   1.5\\
		};
		\addplot [color=black,thick,solid,forget plot]
		table[row sep=crcr]{%
			1.5   0.75\\
			1   -1\\
		};
		
		\addplot [color=mypurple,thick,solid,dotted,forget plot]
		table[row sep=crcr]{%
			-1   -0.5\\
			1.5   0.75\\
		};
		\addplot [color=mygreen,thick,solid,dotted,forget plot]
		table[row sep=crcr]{%
			-1.5   1.5\\
			1   -1\\
		};
		
		\addplot [color=myteal,thick,solid,forget plot]
		table[row sep=crcr]{%
			-1.25   0.5\\
			1.154   -0.462\\
		};
		
		\node[black] (P) at (axis cs:-1.7,1.5) {$a$};
		\node[black] (P) at (axis cs:-1.2,-0.5) {$b$};
		\node[black] (P) at (axis cs:1.4,-1) {$-\alpha a$};
		\node[black] (P) at (axis cs:1.9,0.75) {$-\beta b$};
		\node[black] (P) at (axis cs:-1.6,0.5) {$\frac{a+b}{2}$};
		\node[black] (P) at (axis cs:0,0.3) {$\origin$};
		
		\addplot[only marks,mark=*,mark size=1.5pt,black,
		]  coordinates {
			(0,0) 
		};
		
		\node[black] (P) at (axis cs:-1.3,0) {$\ell_1$};
		\node[black] (P) at (axis cs:1.45,-0.2) {$\ell_2$};
		\node[mygreen] (P) at (axis cs:0.4,-0.8) {$u_a^\perp$};
		\node[mypurple] (P) at (axis cs:-0.5,-0.45) {$u_b^\perp$};
		\node[myteal] (P) at (axis cs:0.75,-0.05) {$u_{a+b}^\perp$};

	\end{axis}
\end{tikzpicture}
	\quad
	\begin{tikzpicture}[scale=0.95]
	\begin{axis}[
		width=2.6in,
		height=1.8in,
		scale only axis,
		xmin=-2.2,
		xmax=3,
		ymax=2.9,
	    ticks = none, 
		ticks = none,
		axis background/.style={fill=white},
		axis line style={draw=none} 
		]
		
		\addplot [color=black,thick,solid,forget plot]
		table[row sep=crcr]{%
			2.5   2.5 \\
			0   0\\
		};
		\addplot [color=black,thick,solid,forget plot]
		table[row sep=crcr]{%
			-1.25   2.5 \\
			0   0\\
		};
		% \addplot [color=myblue,thick,solid,dotted,forget plot]
		%   table[row sep=crcr]{%
			% 1   2.5\\
			% 0   0\\
			% };
		\addplot [color=myteal,thick,solid,forget plot]
		table[row sep=crcr]{%
			0.961   2.4\\
			0   0\\
		};
		\addplot [color=black!80,thick,solid,dotted,forget plot]
		table[row sep=crcr]{%
			2.5   2.5 \\
			-1.25   2.5\\
		};
		
		% \addplot [color=black!80,thick,solid,dotted,forget plot]
		%   table[row sep=crcr]{%
			% -1   -0.5\\
			% 1.5   0.75\\
			% };
		% \addplot [color=black!80,thick,solid,dotted,forget plot]
		%   table[row sep=crcr]{%
			% -1.5   1.5\\
			% 1   -1\\
			% };
		
		% \addplot [color=myteal,thick,solid,forget plot]
		%   table[row sep=crcr]{%
			% -1.25   0.5\\
			% 1.154   -0.462\\
			% };
		
		\node[mygreen] (P) at (axis cs:2.7,2.3) {$p_a$};
		\node[mypurple] (P) at (axis cs:-1.45,2.3) {$p_b$};
		\node[myblue] (P) at (axis cs:0.9,2.7) {$q$};
		\node[myteal] (P) at (axis cs:1.35,2.1) {$p_{a+b}$};
		\node[black] (P) at (axis cs:-0.2,0) {$\origin$};
		
		\addplot[only marks,mark=*,mark size=1.5pt,black,
		]  coordinates {
			(0,0) 
		};
		\addplot[only marks,mark=*,mark size=2pt,mygreen,
		]  coordinates {
			(2.5,2.5) 
		};
		\addplot[only marks,mark=*,mark size=2pt,mypurple,
		]  coordinates {
			(-1.25,2.5) 
		};
		\addplot[only marks,mark=*,mark size=2pt,myblue,
		]  coordinates {
			(1,2.5) 
		};
		\addplot[only marks,mark=*,mark size=2pt,myteal,
		]  coordinates {
			(0.95,2.35) 
		};
		
	\end{axis}
\end{tikzpicture}
	\caption[The proof of \Cref{prop:non_parallel_non_convex} in a picture]{The proof of \Cref{prop:non_parallel_non_convex} in a picture. Left: the lines orthogonal to $u_a,u_b, u_{a+b}$ and their intersections with the edges $\ell_1, \ell_2$ of $P$. Right: the points $p_a,p_b,p_{a+b}\in \partial IP$, and the point $q\in \convof{p_a,p_b}$, but $q\not \in IP$.}
	\label{fig:non_parallel_non_convex}
\end{figure}
\end{proof}

We are now ready to move towards a full classification of convexity of intersection bodies of polygons for any translation. 
Note that if $P$ is centrally symmetric, then the convexity of $P$ and the description of $IP$ follow from the following classical statement.
\begin{theorem}[{\cite[Theorem 8.1.4]{G2006}}]\label{thm:dim2_convex}
	Let $K \subseteq \R^2$ be a centrally symmetric convex body centered at the origin. Then $IK = r_{\frac{\pi}{2}}(2K)$, where $r_{\frac{\pi}{2}}$ is a counter-clockwise rotation by $\frac{\pi}{2}$.
\end{theorem}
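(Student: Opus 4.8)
The plan is to show that $IK$ and $r_{\frac{\pi}{2}}(2K)$ have identical radial functions, which suffices since both are star bodies with respect to the origin and are thus determined by their radial functions. Because $\rho_{IK}(\lambda x) = \tfrac{1}{\lambda}\rho_{IK}(x)$ for $\lambda > 0$, it is enough to verify the equality of radial functions on the unit circle $S^1$.

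First I would make the chord length explicit. For a unit vector $u \in S^1$, the definition gives $\rho_{IK}(u) = \vol_1(K \cap u^\perp)$, where $u^\perp$ is the line through the origin spanned by $r_{\frac{\pi}{2}}(u)$. Since $K$ is centrally symmetric about the origin, its radial function satisfies $\rho_K(-w) = \rho_K(w)$, so the chord $K \cap u^\perp$ is the segment from $-\rho_K(r_{\frac{\pi}{2}}(u))\, r_{\frac{\pi}{2}}(u)$ to $\rho_K(r_{\frac{\pi}{2}}(u))\, r_{\frac{\pi}{2}}(u)$, which has the origin as its midpoint. Its length is therefore $\vol_1(K \cap u^\perp) = 2\,\rho_K(r_{\frac{\pi}{2}}(u))$.

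Next I would compute the radial function of the rotated body. Using the general identity $\rho_{r(L)}(u) = \rho_L(r^{-1}(u))$ for a rotation $r$, together with $\rho_{2K} = 2\rho_K$, we get $\rho_{r_{\frac{\pi}{2}}(2K)}(u) = 2\,\rho_K(r_{-\frac{\pi}{2}}(u))$. Central symmetry gives $r_{-\frac{\pi}{2}}(u) = -\,r_{\frac{\pi}{2}}(u)$ and $\rho_K(-w) = \rho_K(w)$, so this equals $2\,\rho_K(r_{\frac{\pi}{2}}(u))$, matching the expression for $\rho_{IK}(u)$ from the previous step. Hence the two radial functions agree on $S^1$, and the two bodies coincide.

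I expect the only delicate point to be the bookkeeping of rotation directions (distinguishing $r_{\frac{\pi}{2}}$ from $r_{-\frac{\pi}{2}}$) and invoking central symmetry at exactly the right place to reconcile them; the underlying geometry — that a central chord of a centrally symmetric planar body is bisected by the center — is elementary, and no volume inequalities or deeper convexity theory are needed.
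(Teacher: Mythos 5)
Your proof is correct and complete: the paper itself does not prove this statement (it is quoted from Gardner's book \cite[Theorem 8.1.4]{G2006}), and your radial-function computation is essentially the standard textbook argument. One cosmetic slip: the identity $r_{-\frac{\pi}{2}}(u) = -r_{\frac{\pi}{2}}(u)$ is a fact about plane rotations (they differ by $r_\pi = -\mathrm{id}$), not a consequence of the central symmetry of $K$; symmetry of $K$ is only needed for $\rho_K(-w)=\rho_K(w)$, which you do invoke correctly.
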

Our goal is to classify also the cases in which $P$ is not centrally symmetric and centered at the origin. 
We now prove the main result of this section.

\begin{theorem}\label{thm:IP_convex_dim2}
	Let $P \subseteq \R^2$ be a polygon. Then $IP$ is a convex body if and only if
	$P=-P$.
\end{theorem}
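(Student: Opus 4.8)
The plan is to prove both implications, treating the forward (Busemann-type) direction as immediate and devoting the real work to the converse. For sufficiency, if $P=-P$ then $P$ is a centrally symmetric convex body centered at the origin, so \Cref{thm:dim2_convex} gives $IP = r_{\frac{\pi}{2}}(2P)$, a rotation and dilation of the convex polygon $P$; hence $IP$ is convex. For necessity, I would assume $IP$ is convex and first locate the origin. If the origin lies outside $P$ or is a vertex of $P$, then $IP$ is not convex (\Cref{rmk:lower-dim-faces}), so these cases are excluded; note they are also consistent with the claim, since the center of a centrally symmetric polygon lies in its interior. It remains to treat the origin in the relative interior of an edge, and the origin in the interior of $P$.

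For the origin in the relative interior of an edge $\edge$, I would show that the radial function is discontinuous. Let $L = \aff(\edge)$. Since $P$ lies on one side of $L$, for every line $\ell$ through the origin other than $L$ the origin is an endpoint of the chord $\ell \cap P$, whereas for $\ell = L$ the chord is all of $\edge$ and contains the origin in its relative interior. Writing $d_1,d_2 > 0$ for the two lengths into which the origin splits $\edge$, the section length tends to $d_1$ (respectively $d_2$) as $\ell \to L$ from the two sides, but equals $d_1+d_2$ at $\ell = L$. Hence $\rho_{IP}$ jumps at the direction orthogonal to $\edge$. Since $\rho_{IP} > 0$ in every direction, the origin lies in the interior of $IP$, and a convex body with the origin in its interior has a continuous radial function; the jump therefore contradicts convexity. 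So the origin lies in the interior of $P$.

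With the origin in the interior, the contrapositive of \Cref{prop:non_parallel_non_convex} shows that every line through the origin meets $\partial P$ in two \emph{parallel} edges. I would convert this into a statement about directions by tracking the two antipodal intersection points $p^+, p^-$ as the line rotates. The edge carrying $p^+$ changes exactly at the vertex directions $\arg(w)$, $w \in \vertices(P)$. At such a transition, $p^+$ passes from an edge $\edge$ to an adjacent edge $f$; if the edge carrying $p^-$ did not change simultaneously, parallelism would force $\edge \parallel f$, which is impossible for adjacent edges of a convex polygon. Hence $\arg(w)+\pi$ is again a vertex direction. This yields an order-preserving involution on $\vertices(P)$: each vertex $w$ has an antipodal partner $w' = -c_w\, w$ with $c_w > 0$, and each edge $\edge = \convof{u,w}$ has a parallel partner $\edge' = \convof{-c_u u, -c_w w}$.

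Finally, $\edge \parallel \edge'$ gives $-c_w w + c_u u = \kappa(w-u)$ for some $\kappa \in \R$; since the origin is interior, $u,w$ are linearly independent, so comparing coefficients forces $c_u = c_w$. Thus adjacent vertices carry the same constant, and by connectivity of the edge graph a single constant $c > 0$ serves all vertices, i.e.\ $-c\,w \in \vertices(P)$ for every $w$. Applying this twice shows that scaling by $c^2$ permutes the finite set $\vertices(P)$ of nonzero points, forcing $c^2 = 1$, hence $c = 1$; then $\vertices(P) = -\vertices(P)$ and $P = -P$. I expect the main obstacle to be exactly this interior-case argument: promoting the purely local condition ``each line meets two parallel edges'' to the global symmetry $P=-P$. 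The delicate point is establishing the antipodal vertex matching rigorously, that is, verifying that the antipodal intersection point genuinely jumps at each vertex so that the matching is a cyclic-order-preserving bijection; the subsequent rigidity from linear independence and finiteness is then routine.
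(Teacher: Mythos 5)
Your proposal is correct and follows essentially the same route as the paper: exclude the origin outside $P$ or at a vertex via \Cref{rmk:lower-dim-faces}, rule out the origin on an edge by exhibiting a jump discontinuity of $\rho_{IP}$, and in the interior case use \Cref{prop:non_parallel_non_convex} plus a rotating-line/vertex-matching argument to force antipodal vertices $w=-\lambda v$ with a common $\lambda$, whence $\lambda=1$ and $P=-P$. The only cosmetic difference is in the edge case, where the paper derives the discontinuity by comparing $\rho_{IP}$ with $\tfrac12\rho_{I(P\cup -P)}$ while you compute the one-sided limits $d_1,d_2$ versus the value $d_1+d_2$ directly; your spelled-out rigidity step ($c_u=c_w$ by linear independence, then $c^2=1$ by finiteness) is just a more detailed version of the paper's closing argument.
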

\begin{proof}
	As noted in \Cref{rmk:lower-dim-faces}, $IP$ is not convex if the origin lies in $\R^2\setminus P$, or if the origin is a vertex of $P$. We are left to analyze the cases in which the origin lies in the interior of $P$ or in the interior of an edge of $P$.
	
	We first consider the case in which the origin lies in the interior of $P$ and show that $IP$ is convex if and only if $P = -P$. If $P=-P$, then \Cref{thm:dim2_convex} implies that $IP$ is convex. Assume now that $IP$ is convex, and the origin lies in the interior of $P$. Then $C\cap IP$ is convex for every chamber $C$ of $\hyparrofX{P}$. In particular, by \Cref{prop:non_parallel_non_convex}, every line $u^\perp$, $u \in S^1$, which does not intersect a vertex of $P$ intersects $\partial P$ in the interior of two parallel edges. Hence, the edges of $P$ come in pairs of parallel edges.
	We rotate $u \in S^1$ continuously.
	Whenever $u^\perp$ crosses a vertex
	of one edge, it must also cross a vertex in the parallel edge, since otherwise this results in a pair of non-parallel edges. This implies that for every vertex $v$ of $P$, there exists a vertex $w$ of $P$ such that $w = -\lambda v$ for some $\lambda >0$. Since all edges are pairwise parallel, this positive scalar $\lambda$ is the same for all vertices. Therefore, we also get that $v = -\lambda w$, which implies that $\lambda=1$. Hence, $P=-P$.

    Consider now the case in which the origin lies in the interior of an edge of $P$ with normal vector $e \in \R^2$. Thus,  $\rho_{IP}(x) = \tfrac{1}{2} \rho_{ I(P \cup -P)}(x)$ for all $x \in \R^2 \setminus \R e$ and $\rho_{IP}(e) > \tfrac{1}{2} \rho_{I(P \cup -P)}(e)$. Here, $\R e$ denotes the line spanned by $e$.
	Since the origin lies in the interior of the star body $P \cup -P$, its radial function is continuous, which implies that also $\tfrac{1}{2}\rho_{ I(P \cup -P)} $ is continuous. Hence, $\rho_{IP}$ is discontinuous, and therefore $IP$ is not convex. 
\end{proof}

\begin{remark}\label{rmk:almost-convex}

The last case of the proof of \Cref{thm:IP_convex_dim2} can be made more precise. 
Using the notion of chordal symmetral from \cite[Chapter 5.1]{G2006}, we deduce that
\begin{equation*}
		\frac{1}{2} I(P\cup -P)  =   P \cup -P.
	\end{equation*}
	Therefore, $I(P\cup -P)$ is convex if and only if $P \cup -P$ is convex.
    This is the case if and only if the origin is the midpoint of an edge
    and the sum of the angles adjacent to this edge is at most $\pi$.
    Using elementary properties of the sums of interior and exterior angles of polygons, it can be shown that a polygon admits at most $4$ such edges, and equality is realized exactly when $P$ is a parallelogram.
    \Cref{fig:convexIP_classification} shows a collection of examples of polygons, together with the possible positions of the origin on such edges.
    In this case, the argument from the proof of \Cref{thm:IP_convex_dim2} implies that the Euclidean closure of $IP \setminus \R e$ is convex. Here, $\R e$ denotes the line spanned by $e$.
\end{remark}

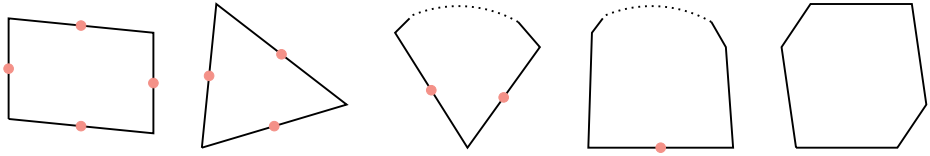
\begin{figure}[h]
	\centering
	\begin{tikzpicture}[scale=0.9]
	\begin{axis}[
		width=1in,
		height=1in,
		scale only axis,
		xmin=-1.2,
		xmax=1.2,
		ymin=-1.2,
		ymax=1.2,
		ticks = none, 
		ticks = none,
		axis background/.style={fill=white},
		axis line style={draw=none} 
		]

		\addplot [color=black,thick,solid,forget plot]
		table[row sep=crcr]{%
			-1   -0.6\\
			-1   0.8\\
			1   0.6\\
			1   -0.8\\
			-1   -0.6\\
		};
		
		% \node[black] (P) at (axis cs:-1.7,1.5) {$a$};
		% \node[black] (P) at (axis cs:-1.2,-0.5) {$b$};
		% \node[black] (P) at (axis cs:1.4,-1) {$-\alpha a$};
		% \node[black] (P) at (axis cs:1.9,0.75) {$-\beta b$};
		% \node[black] (P) at (axis cs:-1.6,0.5) {$\frac{a+b}{2}$};
		% \node[black] (P) at (axis cs:0,0.3) {$0$};
		
		\addplot[only marks,mark=*,mark size=2pt,Salmon,
		]  coordinates {
			(1,-0.1) (-1,0.1) (0,0.7) (0,-0.7)
		};
		
	\end{axis}
\end{tikzpicture} %\hspace*{2.5em}
	\begin{tikzpicture}[scale=0.9]
	\begin{axis}[
		width=1in,
		height=1in,
		scale only axis,
		xmin=-1.2,
		xmax=1.2,
		ymin=-1.2,
		ymax=1.2,
		ticks = none, 
		ticks = none,
		axis background/.style={fill=white},
		axis line style={draw=none} 
		]

		\addplot [color=black,thick,solid,forget plot]
		table[row sep=crcr]{%
			-1   -1\\
			-0.8   1\\
			1   -0.4\\
			-1   -1\\
		};
		
		% \node[black] (P) at (axis cs:-1.7,1.5) {$a$};
		% \node[black] (P) at (axis cs:-1.2,-0.5) {$b$};
		% \node[black] (P) at (axis cs:1.4,-1) {$-\alpha a$};
		% \node[black] (P) at (axis cs:1.9,0.75) {$-\beta b$};
		% \node[black] (P) at (axis cs:-1.6,0.5) {$\frac{a+b}{2}$};
		% \node[black] (P) at (axis cs:0,0.3) {$0$};
		
		\addplot[only marks,mark=*,mark size=2pt,Salmon,
		]  coordinates {
			(-0.9,0) (0.1,0.3) (0,-0.7)
		};
		
	\end{axis}
\end{tikzpicture} %\hspace*{1.5em}
	\begin{tikzpicture}[scale=0.9]
	\begin{axis}[
		width=1in,
		height=1in,
		scale only axis,
		xmin=-1.2,
		xmax=1.2,
		ymin=-1.2,
		ymax=1.2,
		ticks = none, 
		ticks = none,
		axis background/.style={fill=white},
		axis line style={draw=none} ,
		disabledatascaling
		]

		\addplot [color=black,thick,solid,forget plot]
		table[row sep=crcr]{%
			-0.8   0.8\\
			-1   0.6\\
			0   -1\\
			1   0.4\\
			0.7   0.75\\
		};
		\draw [thick,dotted] (axis cs:0.7,0.75) arc [radius=1.65,start angle=60,end angle=113];
		% \node[black] (P) at (axis cs:-1.7,1.5) {$a$};
		% \node[black] (P) at (axis cs:-1.2,-0.5) {$b$};
		% \node[black] (P) at (axis cs:1.4,-1) {$-\alpha a$};
		% \node[black] (P) at (axis cs:1.9,0.75) {$-\beta b$};
		% \node[black] (P) at (axis cs:-1.6,0.5) {$\frac{a+b}{2}$};
		% \node[black] (P) at (axis cs:0,0.3) {$0$};
		
		\addplot[only marks,mark=*,mark size=2pt,Salmon,
		]  coordinates {
			(-0.5,-0.2) (0.5,-0.3)
		};
		
	\end{axis}
\end{tikzpicture} %\\
	\begin{tikzpicture}[scale=0.9]
	\begin{axis}[
		width=1in,
		height=1in,
		scale only axis,
		xmin=-1.2,
		xmax=1.2,
		ymin=-1.2,
		ymax=1.2,
		ticks = none, 
		ticks = none,
		axis background/.style={fill=white},
		axis line style={draw=none} ,
		disabledatascaling
		]

		\addplot [color=black,thick,solid,forget plot]
		table[row sep=crcr]{%
			-0.8   0.8\\
			-0.95   0.6\\
			-1   -1\\
			1   -1\\
			0.9   0.4\\
			0.7   0.75\\
		};
		\draw [thick,dotted] (axis cs:0.7,0.75) arc [radius=1.65,start angle=60,end angle=113];
		% \node[black] (P) at (axis cs:-1.7,1.5) {$a$};
		% \node[black] (P) at (axis cs:-1.2,-0.5) {$b$};
		% \node[black] (P) at (axis cs:1.4,-1) {$-\alpha a$};
		% \node[black] (P) at (axis cs:1.9,0.75) {$-\beta b$};
		% \node[black] (P) at (axis cs:-1.6,0.5) {$\frac{a+b}{2}$};
		% \node[black] (P) at (axis cs:0,0.3) {$0$};
		
		\addplot[only marks,mark=*,mark size=2pt,Salmon,
		]  coordinates {
			(0,-1)
		};
		
	\end{axis}
\end{tikzpicture} %\hspace*{3em}
	\begin{tikzpicture}[scale=0.9]
	\begin{axis}[
		width=1in,
		height=1in,
		scale only axis,
		xmin=-1.2,
		xmax=1.2,
		ymin=-1.2,
		ymax=1.2,
		ticks = none, 
		ticks = none,
		axis background/.style={fill=white},
		axis line style={draw=none} 
		]

		\addplot [color=black,thick,solid,forget plot]
		table[row sep=crcr]{%
			-0.8   -1\\
			-1   0.4\\
			-0.6   1\\
			0.8   1\\
			1   -0.4\\
			0.6   -1\\
			-0.8   -1\\
		};
		
		% \node[black] (P) at (axis cs:-1.7,1.5) {$a$};
		% \node[black] (P) at (axis cs:-1.2,-0.5) {$b$};
		% \node[black] (P) at (axis cs:1.4,-1) {$-\alpha a$};
		% \node[black] (P) at (axis cs:1.9,0.75) {$-\beta b$};
		% \node[black] (P) at (axis cs:-1.6,0.5) {$\frac{a+b}{2}$};
		% \node[black] (P) at (axis cs:0,0.3) {$0$};

	\end{axis}
\end{tikzpicture}
	\caption[Examples in which $IP$ is convex]{Examples of positions of the origin (orange bullet) in which $IP$ is almost convex, as described in \Cref{rmk:almost-convex}. From left to right: a parallelogram, an acute triangle, a diamond shape, a panettone shape, and a centrally symmetric polygon (which has no admissible positions).}
	\label{fig:convexIP_classification}
\end{figure}

We close this section by pointing out that many arguments made in this section do not generalize to higher dimensions: In contrast to \Cref{prop:parallel_segments,prop:non_parallel_non_convex}, in higher dimensions there exist convex pieces $IP \cap C$ which are not linear.
Furthermore, the identification with the chordal symmetral body, as in \Cref{rmk:almost-convex}, does not hold in general. However, these insights in the $2$-dimensional case will turn out to be essential for arguments on the general case in the following section.

\section{Convexity in Higher Dimensions}\label{sec:higher_dim}

We devote this section to discuss the convexity of intersection bodies of polytopes of dimension $d >2$. We make use of the results obtained in \Cref{sec:convexity} to show that, similar to the $2$-dimensional case, the intersection body of a $d$-dimensional parallelepiped is convex if and only if the origin is its center of symmetry. In contrast, we give a sufficient condition under which there are infinitely many positions of the origin for which the intersection body of a given polytope is (strictly) convex.

\begin{remark}\label{rmk:lower-dim-faces}
    To obtain an intersection body $IP$ which is convex, the origin must lie in the interior of $P$. 
    If the origin lies in the interior of a facet, the argument from \Cref{thm:IP_convex_dim2} applies analogously, i.e., $\rho_{IP}(x) = \tfrac{1}{2} \rho_{ I(P \cup -P)}(x)$ for all $x \in \R^2$ except for the two normals of the facet, for which a strict inequality holds. Hence, $\rho_{IP}$ is discontinuous, and therefore $IP$ is not convex. 
    If the origin lies on a lower-dimensional face $F$, there exists a hyperplane $x^\perp$ such that $P \cap x^\perp = F$ and thus the radial function of $IP$ in direction $x$ has value $0$. The set of such $x$ is a cone $V = C \cup -C$, where $C\subset \R^d$ is a convex pointed cone. Then, given $x\in C$, there exist $x_1,x_2 \in \R^d \setminus V$ such that $x$ is a convex combination of $x_1$ and $x_2$. Since $\rho_{IP}(x) = 0$, the segment with extrema $\rho_{IP}(x_1) \: x_1$ and $\rho_{IP}(x_2) \: x_2$ is not entirely contained in the intersection body $IP$, but its extrema are.
\end{remark}

The next result connects the intersection body of a convex body to the intersection body of a prism over the given convex body.

\begin{proposition}\label{prop:prism}
    Let $L \subseteq \R^{d-1}$ be a convex body and $K = L \times [a,b] \subseteq \R^{d-1} \times \R \cong \R^d$ be a prism over $L$. Then, the intersection of $IK$ with the hyperplane $H = \{x \in \R^d \mid x_d=0\}$ is the $(b-a)$th dilate of $IL$, i.e.,
    \[
        IK \cap H = (b-a) \  IL.
    \]
\end{proposition}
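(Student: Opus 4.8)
The plan is to compute the intersection body $IK$ directly from its defining radial function, restricted to directions lying in the hyperplane $H$, and to show that this radial function agrees with $(b-a)$ times the radial function of $IL$. The key observation is that a direction $x$ lying in $H = \{x_d = 0\}$ has its orthogonal hyperplane $x^\perp$ containing the $x_d$-axis, so $x^\perp$ is itself a ``vertical'' prism-compatible hyperplane. Writing $x = (x', 0)$ with $x' \in \R^{d-1}$, the relevant cross-section is $K \cap x^\perp = (L \cap (x')^\perp) \times [a,b]$, where $(x')^\perp$ is now taken inside $\R^{d-1}$. This product structure is the crux: the central slice of the prism in a horizontal direction is a prism over the corresponding slice of $L$.

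The main steps, in order, are as follows. First I would fix $x = (x',0) \in H$ with $x' \neq \origin$ and verify the identity $K \cap x^\perp = (L \cap (x')^\perp) \times [a,b]$ as subsets of $\R^{d-1}\times\R$; this is a direct set-theoretic computation using $\langle x, y\rangle = \langle x', y'\rangle$ for $y = (y', y_d)$, so the constraint $\langle x, y\rangle = 0$ only involves the first $d-1$ coordinates. Second, I would apply Fubini (or the elementary volume-of-a-prism formula) to obtain
\[
    \vol_{d-1}(K \cap x^\perp) = (b-a) \cdot \vol_{d-2}\bigl(L \cap (x')^\perp\bigr).
\]
Third, I would divide by $\|x\| = \|x'\|$ (which holds since $x_d = 0$) to translate this into radial functions:
\[
    \rho_{IK}(x) = \frac{1}{\|x\|}\vol_{d-1}(K \cap x^\perp) = (b-a)\,\frac{1}{\|x'\|}\vol_{d-2}\bigl(L \cap (x')^\perp\bigr) = (b-a)\,\rho_{IL}(x').
\]
Finally, I would read off the conclusion: $IK \cap H$ is exactly the set of $x = (x',0)$ with $\rho_{IK}(x) \geq 1$, which by the displayed identity is the set of $x'$ with $\rho_{IL}(x') \geq 1/(b-a)$, and by the homogeneity $\rho_{IL}(\lambda x') = \tfrac{1}{\lambda}\rho_{IL}(x')$ this is precisely the dilate $(b-a)\,IL$, embedded in $H$.

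The step I expect to be the only genuine subtlety is the dimensional bookkeeping: one must be careful that $\vol_{d-1}$ of a prism with base $L \cap (x')^\perp$ (a $(d-2)$-dimensional set) and height $b-a$ indeed equals $(b-a)$ times its $(d-2)$-dimensional base volume, and that the orthogonal complement $(x')^\perp$ taken inside $\R^{d-1}$ matches the horizontal part of $x^\perp$ taken inside $\R^d$. Because the prism direction $e_d$ is orthogonal to $H$ and lies in $x^\perp$ for every $x \in H$, the cross-section splits isometrically as an orthogonal product, so no Jacobian corrections arise and the volume factorizes cleanly. Everything else is a routine substitution, and no appeal to the chamber/polynomial structure of earlier sections is needed, since this statement holds for arbitrary convex bodies $L$.
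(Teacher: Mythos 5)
Your proposal is correct and follows essentially the same route as the paper: identify $K \cap x^\perp = (L \cap (x')^\perp) \times [a,b]$ for $x = (x',0) \in H$, factor the volume as $(b-a)$ times the base volume, and translate this into the radial-function identity $\rho_{IK}(x) = (b-a)\,\rho_{IL}(x')$. Your final step, spelling out via homogeneity why the superlevel set $\{\rho_{IL}(x') \geq 1/(b-a)\}$ is exactly the dilate $(b-a)\,IL$, is slightly more detailed than the paper's one-line conclusion, but it is the same argument.
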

\begin{proof}
    Let $u = (\widetilde{u},0) \in H$ and consider its orthogonal complement $u^\perp \subseteq \R^{d}$, which in this case can be interpreted as $\widetilde{u}^\perp \times \R \subseteq \R^{d-1}\times \R$. Then 
    \begin{equation*}
        K \cap u^\perp = (L \times [a,b]) \cap (\widetilde{u}^\perp \times \R) = (L\cap \widetilde{u}^\perp) \times [a,b].
    \end{equation*}
    We can therefore compute the radial function of $IK$ as
    \begin{equation*}
        \rho_{IK}(u) = \vol_{d-1} (K \cap u^\perp) = \vol_{d-1} \big( (L\cap \widetilde{u}^\perp) \times [a,b] \big) = (b-a) \cdot \rho_{IL}(\widetilde{u})
    \end{equation*}
    for $u\in H$. Equivalently, $IK \cap H = (b-a) \  IL$.
\end{proof}
It follows that if $IL$ is non-convex, then so is $IK$. This behavior can be observed in the following example.
\begin{example}
    Consider the unit cube $P = [-1,1]^3$, which is a prism over a square. With the translation $t = (1,1,1)$ we obtain the cube $P+t = [0,2]^3$, and $I(P+t)$ is displayed in \Cref{fig:slice}, from two different points of view. \Cref{prop:prism} implies that $I(P+t) \cap (0,0,1)^\perp$ is the second dilation of the intersection body of the square $[0,2]^2$, which is also displayed at the bottom left of \Cref{fig:affine-lines-square} in red.
    \begin{figure}[ht]
    \centering
    \includegraphics[width=0.36\textwidth]{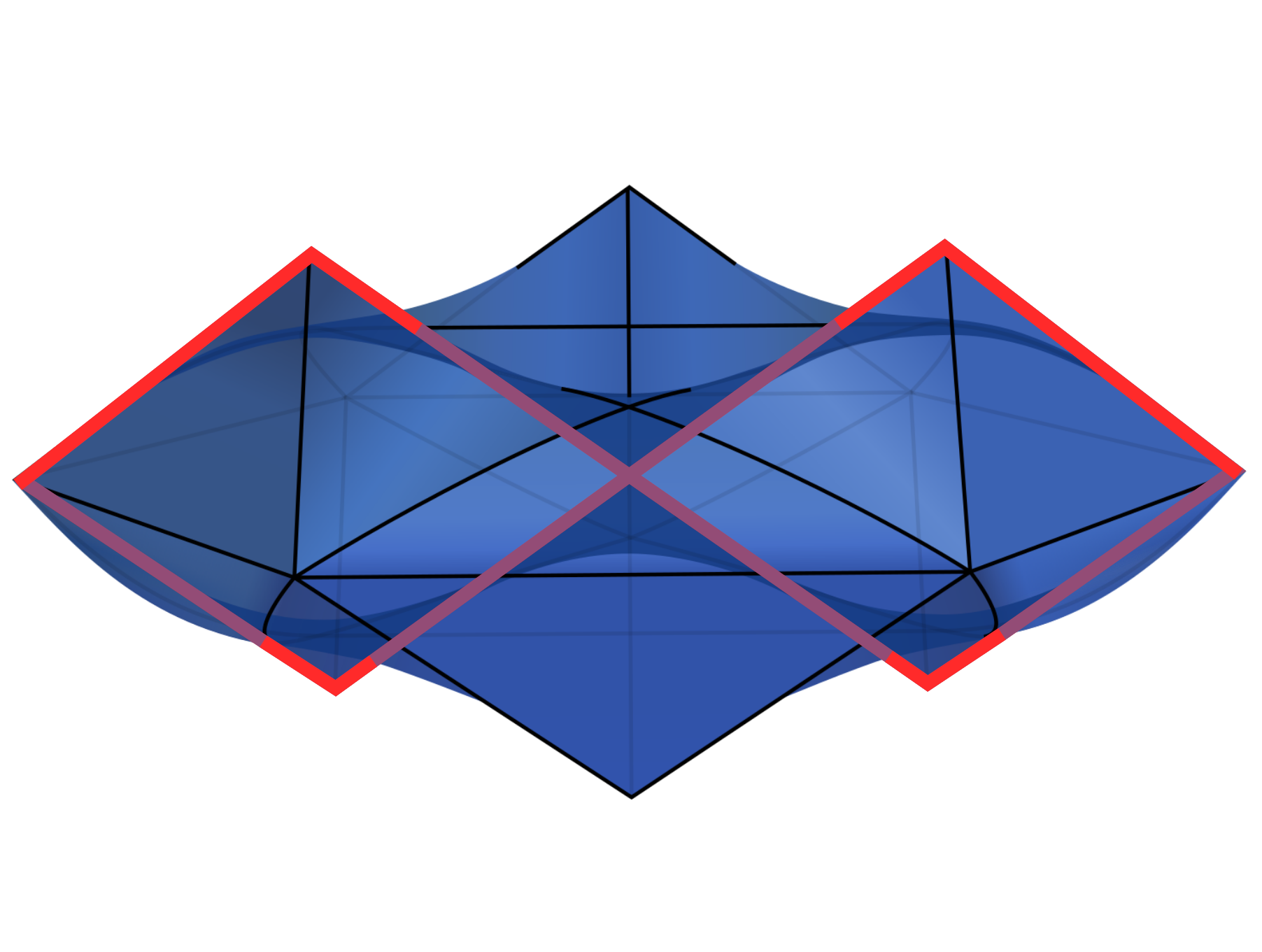}
    \hspace{2em}
    \includegraphics[width=0.3\textwidth]{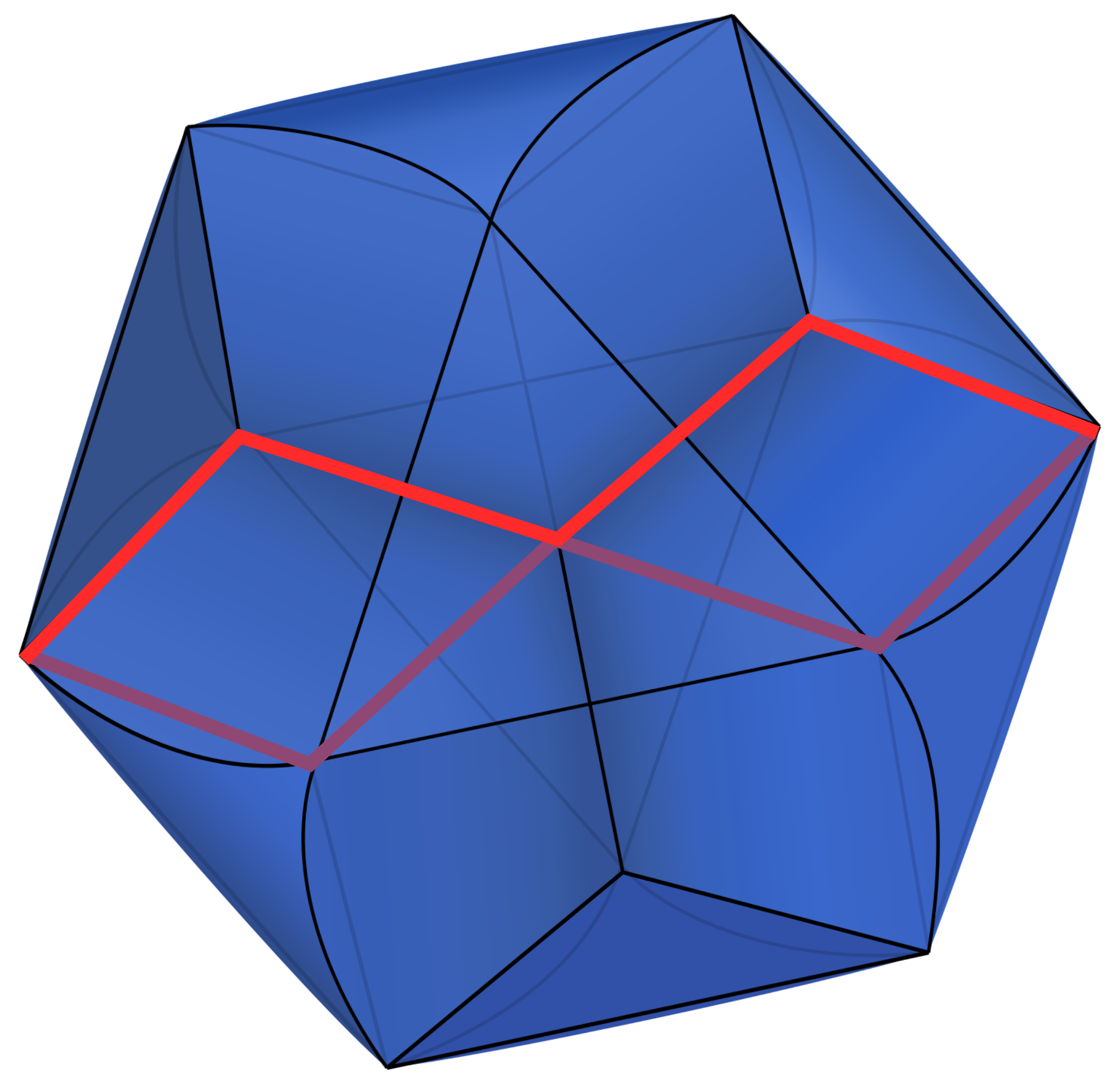}
    \caption{The intersection body of the $3$-dimensional cube $P=[0,2]^3$ (blue) and the intersection body of the square $Q = [0,2]^2$ (red).}
    \label{fig:slice}
    \end{figure}
    
\end{example}

We can now use \Cref{prop:prism} to describe the convexity of intersection body of a parallelepiped in any dimension.
\begin{proposition}\label{prop:cube}
    Let $P = [a_1,b_1]\times [a_2,b_2] \times \dots \times [a_d,b_d]$ be a $d$-dimensional parallelepiped.
    Then $IP$ is convex if and only if $P=-P$.
\end{proposition}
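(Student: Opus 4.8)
The plan is to separate the two implications and lean on the lower-dimensional results as black boxes. For the direction $P=-P \Rightarrow IP$ convex, observe that if $a_i=-b_i$ for all $i$ then $P$ is a convex body symmetric about the origin, so convexity of $IP$ is immediate from Busemann's theorem \cite{busemann49_theoremconvexbodies}. The content of the proposition is the converse, and there I would first settle the position of the origin: by \Cref{rmk:lower-dim-faces}, if $IP$ is convex then the origin lies in the interior of $P$, so $a_i<0<b_i$ for every $i$.

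For the converse I would reduce to the planar classification \Cref{thm:IP_convex_dim2} by slicing $IP$ with coordinate $2$-planes. Fix indices $j\neq k$, let $W_{jk}=\{x\in\R^d \mid x_i=0 \text{ for } i\notin\{j,k\}\}\cong\R^2$ be the corresponding linear $2$-plane, and set $R_{jk}=[a_j,b_j]\times[a_k,b_k]$. The key claim is the identity
\[
IP \cap W_{jk} = \Big(\prod_{i \neq j,k}(b_i-a_i)\Big)\, I(R_{jk}),
\]
viewing $I(R_{jk})$ inside $W_{jk}$. This is exactly the computation of \Cref{prop:prism} iterated $d-2$ times: each slice of the box $P$ by a coordinate hyperplane is again an axis-aligned box, hence again a prism over a box, so the hypotheses of \Cref{prop:prism} remain satisfied at every step. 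Equivalently one checks it directly: for $u\in W_{jk}$ the hyperplane $u^\perp$ only constrains the coordinates $x_j,x_k$, so $P\cap u^\perp$ is the product of a segment in $R_{jk}$ with the remaining full intervals $\prod_{i\neq j,k}[a_i,b_i]$, and dividing its $(d-1)$-volume by $\|u\|$ (which depends only on $u_j,u_k$) yields the asserted dilate of $\rho_{I(R_{jk})}$.

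With this identity in hand the argument closes quickly. Since $IP$ is convex and $W_{jk}$ is a linear subspace, $IP\cap W_{jk}$ is convex, and hence so is its positive dilate $I(R_{jk})$. By \Cref{thm:IP_convex_dim2} this forces $R_{jk}=-R_{jk}$, i.e.\ $a_j=-b_j$ and $a_k=-b_k$. Letting $(j,k)$ range over all pairs of indices (possible since $d\geq 2$) gives $a_i=-b_i$ for every $i$, that is $P=-P$.

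The only real work is the slicing identity, and the main point to be careful about is the bookkeeping in iterating \Cref{prop:prism}: one must verify that after cutting by a coordinate hyperplane the result is still an axis-aligned box in one lower dimension, so that \Cref{prop:prism} applies again (alternatively, the direct volume factorization above bypasses this entirely). Everything else — the easy direction via Busemann, the reduction of the origin to the interior via \Cref{rmk:lower-dim-faces}, and the final appeal to the $2$-dimensional classification — is routine. If one wants the statement for an arbitrary, not necessarily axis-aligned, parallelepiped, it follows from the box case together with the $GL_d(\R)$-covariance $I(\phi K)=|\det\phi|\,\phi^{-\top}(IK)$ of intersection bodies: writing such a $P$ as $\phi(B)$ for an axis-aligned box $B$ and $\phi\in GL_d(\R)$, convexity of $IP=|\det\phi|\,\phi^{-\top}(IB)$ is equivalent to convexity of $IB$ (as $\phi^{-\top}$ is a linear isomorphism), and $P=-P$ is equivalent to $B=-B$.
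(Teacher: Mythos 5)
Your proposal is correct and takes essentially the same route as the paper: Busemann's theorem for the forward direction, and for the converse a reduction to the planar classification (\Cref{thm:IP_convex_dim2}) by slicing $IP$ with coordinate subspaces via \Cref{prop:prism}. The paper organizes this reduction as an induction on $d$, slicing off one coordinate at a time so that $IP\cap\{x \mid x_d=0\}$ is a dilate of the intersection body of a $(d-1)$-dimensional box, whereas you unroll the same computation into a single slice down to each coordinate $2$-plane $W_{jk}$; the content is identical, and your closing $GL_d(\R)$-covariance remark for non-axis-aligned parallelepipeds is correct but goes beyond what the paper's statement (a product of intervals) requires.
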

\begin{proof}

     If $P=-P$ then $IP$ is convex by Busemann's Theorem \cite{busemann49_theoremconvexbodies}.
    Conversely, let $P \neq -P$.
    We prove that $IP$ is not convex by induction on $d$. 
    The base case of $d = 2$ follows from \Cref{thm:IP_convex_dim2}.
    Let now $P = [a_1,b_1]\times [a_2,b_2] \times \dots \times [a_d,b_d]$.
    By \Cref{rmk:lower-dim-faces} we assume that the origin lies in the interior of $P$, and thus $a_i < 0 < b_i$ for all $i \in 1,\dots,d$. 
    Without loss of generality, $P \neq -P$ implies that $a_1 \neq -b_1$. Let $Q = P \cap H$, where $H = \{x \in \R^d \mid x_d = 0 \}$. Notice that $P = Q \times [a_d,b_d]$. Thus, $Q$ is a parallelepiped of dimension $(d-1)$ such that $Q \neq -Q$. By induction, this implies that $IQ$ is not convex.  \Cref{prop:prism} implies that $IP \cap H = (b_d - a_d) IQ$. As a consequence, $IP \cap H$ is not convex, and therefore $IP$ is not convex.
\end{proof}

\begin{remark}\label{prop:open-ball}
    We note that whenever the intersection body is strictly convex, then there is an open ball around the origin of translation vectors such that the intersection body is still convex. Indeed, this holds in more generality for the intersection body $IK$ of any star body $K \subseteq \R^d$, with $\origin$ in its interior, and follows directly from the continuity of the volume function, and therefore of the radial function, with respect to $t$. 
    Let $x,y\in \R^d$ and $p_{\star} \!=\! \rho_{I(K+t)}(\star) \cdot\star$ for $\star \in \{x, y, x+y\}$, so that $p_{\star} \in \partial I(K+t)$. Denote by $q_{x+y}$ the point of the segment $\convof{p_x,p_y}$ which is a multiple of $x+y$, namely $q_{x+y} = \frac{\rho_{I(P+t)}(x)\: \rho_{I(K+t)}(y)}{\rho_{I(K+t)}(x) + \rho_{I(K+t)}(y)} (x+y)$. 
    Then, $I(K+t)$ is strictly convex if and only if 
    \begin{equation}\label{eq:convexity}
        \frac{\rho_{I(P+t)}(x)\: \rho_{I(K+t)}(y)}{\rho_{I(K+t)}(x) + \rho_{I(K+t)}(y)} = \frac{\|q_{x+y}\|}{\|x+y\|} < \frac{\|p_{x+y}\|}{\|x+y\|} = \rho_{I(K+t)}(x+y).
    \end{equation}
    This gives a quadratic condition in $\rho_{I(K+t)}$, which is continuous in $t$. Therefore, if \eqref{eq:convexity} holds for $IK$, it holds also for $I(K+t)$ with $t\in B_\varepsilon(\origin)$, for some $\varepsilon>0$.
\end{remark}

The next example shows that strictly convex intersection bodies of polytopes as in \Cref{prop:open-ball} do indeed exist.

\begin{example}\label{ex:strictly-convex}
    The intersection body of the $3$-dimensional centrally symmetric icosahedron $P$ is strictly convex. Indeed, using \texttt{HomotopyContinuation.jl} \cite{HomotopyContinuation.jl} one can check that the algebraic varieties that define the boundary of $IP$ do not contain lines (this is expected, since the generic quintic and sextic surface in $3$-dimensional space do not contain lines). Moreover, because of the central symmetry, the intersection body is convex. Hence, it is strictly convex. This intersection body is displayed in \cite[Figure 1]{BBMS:IntersectionBodiesPolytopes}, and our computations can be verified using the code on MathRepo \cite{mathrepo}.
\end{example}

To summarize, we have studied the admissible positions of the origin with respect to a full-dimensional polytope $P$, such that $IP$ is convex.
For $d=2$ we have shown that the set of admissible positions is precisely the center of symmetry (if it exists).
In higher dimensions it is sometimes infinite, as for the icosahedron, but other times only a single point, as for a cube. We note that proving non-convexity is a much easier task then proving convexity, as the first can be achieved by showing the non-convexity of a small curve on the boundary, while convexity is a global condition. 
A possible approach to tackle this problem in the case of polytopes might be studying the curvature of the algebraic hypersurfaces defining the boundary of the intersection body, as in \cite{BRW:CurvatureHypersurfaces}.

Another interesting direction of research concerns the topology of the set of admissible positions. We collect here some open questions.
\vspace{-0.5em}
\paragraph{Questions:}
\begin{enumerate}
    \item If the set of admissible positions of $P$ is finite, what is its cardinality?
    \item If the set of admissible positions of $P$ is infinite, how many connected components does it have? What is the dimension of these connected components?
    \item If $IP$ is convex but not strictly convex, does this imply $P = -P$?
    \item What are the conditions on $P$ that make $IP$ strictly convex?
\end{enumerate}

\printbibliography

%\vspace*{\fill}
\vspace{0.5cm}
\small{
\subsection*{Affiliations}
\vspace{0.2cm}
\begin{minipage}[t]{0.49\textwidth}
\noindent \textsc{Marie-Charlotte Brandenburg} \\
\textsc{ Max Planck Institute for Mathematics \\ in the Sciences} \\
 \url{marie.brandenburg@mis.mpg.de} \\
 \end{minipage}\hspace{.5em}
 \begin{minipage}[t]{0.49\textwidth}
\noindent \textsc{Chiara Meroni} \\
\textsc{ Max Planck Institute for Mathematics \\ in the Sciences} \\
\url{chiara.meroni@mis.mpg.de}
\end{minipage}
}
\end{document}